\documentclass[11pt]{amsart}
\usepackage[
  left=1.15in,
  right=1.15in,
  top=1.0in,
  bottom=1.05in
]{geometry}
\usepackage{amsmath,amsthm,amssymb,mathtools,amsfonts,amsopn,amscd}
\usepackage{bm}
\usepackage{tikz}
\usepackage{tikz-cd}    % commutative diagrams with tikz
\usepackage{rotating}
\usepackage{graphicx}
\usepackage{etoolbox}
\usepackage{enumitem}
\usepackage{longtable}
\usepackage{booktabs} 
\usepackage{tabularx} 
\usepackage{comment}
\usepackage{xcolor}
\usepackage{hyperref}
\usepackage{subfiles}
\usepackage{calc}
\usepackage[numbered]{bookmark}  
\usepackage{mathrsfs}
\usepackage[sc]{mathpazo}
\usepackage{euscript}

\usepackage[noabbrev,nosort]{cleveref}
\crefname{section}{Section}{Sections}
\Crefname{section}{Section}{Sections}
\crefname{subsection}{Section}{Sections}
\Crefname{subsection}{Section}{Sections}
\crefname{equation}{Equation}{Equations}
\Crefname{equation}{Equation}{Equations}
\crefname{figure}{Figure}{Figures}
\Crefname{figure}{Figure}{Figures}
\crefname{table}{Table}{Tables}
\Crefname{table}{Table}{Tables}
\crefname{thm}{Theorem}{Theorems}
\Crefname{thm}{Theorem}{Theorems}
\crefname{lem}{Lemma}{Lemmas}
\Crefname{lem}{Lemma}{Lemmas}
\crefname{prop}{Proposition}{Propositions}
\Crefname{prop}{Proposition}{Propositions}
\crefname{cor}{Corollary}{Corollaries}
\Crefname{cor}{Corollary}{Corollaries}
\crefname{df}{Definition}{Definitions}
\Crefname{df}{Definition}{Definitions}
\crefname{ex}{Example}{Examples}
\Crefname{ex}{Example}{Examples}
\crefname{rmk}{Remark}{Remarks}
\Crefname{rmk}{Remark}{Remarks}
\crefname{conj}{Conjecture}{Conjectures}
\Crefname{conj}{Conjecture}{Conjectures}
\crefname{clm}{Claim}{Claims}
\Crefname{clm}{Claim}{Claims}

\setlist[enumerate,1]{label=(\roman*)}

\AddToHook{env/lem/begin}{\crefalias{thm}{lem}}
\AddToHook{env/prop/begin}{\crefalias{thm}{prop}}
\AddToHook{env/cor/begin}{\crefalias{thm}{cor}}
\AddToHook{env/df/begin}{\crefalias{thm}{df}}
\AddToHook{env/ex/begin}{\crefalias{thm}{ex}}
\AddToHook{env/rmk/begin}{\crefalias{thm}{rmk}}
\AddToHook{env/clm/begin}{\crefalias{thm}{clm}}
\AddToHook{env/conj/begin}{\crefalias{thm}{conj}}

\definecolor{secondaryColor}{RGB}{0, 0, 170}
\hypersetup{
	colorlinks=true,
	linkcolor=blue,
	urlcolor=secondaryColor,
	citecolor=secondaryColor,
	linktoc=page,
}% 設定超連結參數

\theoremstyle{plain}
\newtheorem{thm}{Theorem}[section]
\newtheorem{lem}[thm]{Lemma}
\newtheorem{prop}[thm]{Proposition}
\newtheorem{cor}[thm]{Corollary}
\newtheorem{conj}[thm]{Conjecture}

\theoremstyle{definition}
\newtheorem{df}[thm]{Definition}

\theoremstyle{remark}
\newtheorem{rmk}[thm]{Remark}

\newcommand{\ZZ}{\mathbb{Z}}
\newcommand{\NN}{\mathbb{N}}
\newcommand{\QQ}{\mathbb{Q}}
\newcommand{\RR}{\mathbb{R}}
\newcommand{\CC}{\mathbb{C}}

\newcommand{\FF}{\mathbb{F}}

\newcommand{\gfk}{\mathfrak{g}}

\newcommand{\Hfk}{\mathfrak{H}}

\newcommand{\Acal}{\mathcal{A}}

\newcommand{\Ical}{\mathcal{I}}

\newcommand{\Rcal}{\mathcal{R}}
\newcommand{\Scal}{\mathcal{S}}

\newcommand{\Zcal}{\mathfrak{Z}}

\newcommand{\id}{\operatorname{id}}

\let\oldforall\forall
\renewcommand{\forall}{\oldforall \: }
\let\oldexist\exists
\renewcommand{\exists}{\oldexist \: }

\newcommand{\dep}{\operatorname{dep}}
\newcommand{\wt}{\operatorname{wt}}

\newcommand{\ww}[1]{\mathfrak{#1}}

\DeclareMathOperator{\Span}{span}

\newcommand{\ov}[1]{\overline{#1}}
\newcommand{\ant}{{\star-\mathrm{inv}}}

\newcommand{\ev}{\mathrm{ev}}
\newcommand{\sA}{\mathscr{A}}
\newcommand{\Li}{\mathrm{Li}}
\allowdisplaybreaks

\makeatletter

\newcommand{\myToC}{{
		\renewcommand{\contentsname}{}
		\@starttoc{toc}{\contentsname}
}}

\patchcmd{\@tocline}
{\hfil}
{\leaders\hbox{\,.\,}\hfil}

\makeatother

\title[A Kaneko--Zagier-Type Conjecture]{Multiple Zeta Values in Positive Characteristic:\\ A Kaneko--Zagier-Type Conjecture}

\author{Hung-Chun Tsui}
\address{(Hung-Chun Tsui) Department of Mathematics, National Tsing Hua University, No. 101, Sec. 2, Guangfu Rd., East Dist., Hsinchu City 300044, Taiwan (R.O.C.)}
\email{hctsui@gapp.nthu.edu.tw}

\date{\today}

\subjclass[2020]{Primary 11M32; Secondary 11M38, 11R58}
\keywords{Function fields, multiple zeta values, finite multiple zeta values}

\begin{document}

\begin{abstract}
In this paper, we study a comparison between $\infty$-adic and finite multiple zeta values in positive characteristic. We construct a surjective comparison map from the algebra of $\infty$-adic multiple zeta values to a quotient of the algebra of finite multiple zeta values. This map factors through the quotient by $\zeta_A(q-1)$, thereby establishing the well-definedness of a Kaneko--Zagier-type conjectural map. We further show that the comparison map is compatible with special values of Carlitz multiple polylogarithms, as well as with the corresponding $\star$-inverse values. As an application, we prove the upper bound predicted by Shi's dimension conjecture for finite multiple zeta values.
\end{abstract}

\maketitle
% \tableofcontents
%%%%%%%%%%%%%%%%%%%%%%%%%%%%%%%%%%%%%%%%
%%%%%%%%%%%%%%%%%%%%%%%%%%%%%%%%%%%%%%%%
\section{Introduction}

\subsection{Classical Background}
\label{sec:classical-background}
We let $\NN$ denote the set of positive integers and $\ZZ$ the set of
integers. Throughout the paper, we adopt the convention that empty sums are zero and empty products are one. We let
\[
\Ical=\{\varnothing\}\cup\bigcup_{r\geq1}\NN^r
\]
denote the set of indices and define 
\[
\dep(\ww{s})=r,
\qquad
\wt(\ww{s})=s_1+\cdots+s_r,\qquad \dep(\varnothing)=\wt(\varnothing)=0
\]
for any $\ww{s}=(s_1,\ldots,s_r)\in\Ical$. 
For an admissible non-empty index
$\ww{s}=(s_1,\ldots,s_r)\in\Ical$, namely $s_1\geq2$, the classical
\emph{multiple zeta value} is defined by
\[
\zeta(\ww{s})
=
\sum_{n_1>\cdots>n_r\geq1}
\frac{1}{n_1^{s_1}\cdots n_r^{s_r}}
\in\RR.
\]
It is known that multiple zeta values admit the stuffle relations arising
from their defining series, as well as the shuffle relations arising from
their iterated integral representations. We put $\Zcal$ for the $\QQ$-algebra generated by all multiple zeta values.

An arithmetic counterpart of classical multiple zeta values is
provided by \emph{finite multiple zeta values} introduced by Kaneko and Zagier
(see \cite{KZFMZV}; see also \cite{Kan19}). Put
\[
\Acal
=
\left(\prod_{p}\ZZ/(p)\right)
\bigg/
\left(\bigoplus_{p}\ZZ/(p)\right),
\]
where $p$ runs through the prime numbers. Note that there is a natural embedding $\QQ\hookrightarrow\Acal$. For a non-empty index $\ww{s}=(s_1,\ldots,s_r)\in\Ical$, the corresponding finite multiple
zeta value is defined by
\[
\zeta_{\Acal}(\ww{s})
=
\left(
\sum_{p>n_1>\cdots>n_r\geq1}
\frac{1}{n_1^{s_1}\cdots n_r^{s_r}}
\bmod p
\right)_p
\in\Acal.
\]
We write $\Zcal_{\Acal}$ for the $\QQ$-algebra generated by all finite
multiple zeta values.

Kaneko and Zagier also introduced the \emph{symmetric multiple zeta values}
$\zeta_{\Scal}(\ww{s})$, $\ww{s}\in\Ical$, which are naturally regarded as elements of
$\Zcal/\zeta(2)\Zcal$. They observed a striking similarity between the
relations satisfied by finite and symmetric multiple zeta values and were
led to the following Kaneko--Zagier conjecture \cite{KZFMZV}.
\begin{conj}[Kaneko, Zagier]\label{conj:kz}
There exists a well-defined $\QQ$-algebra isomorphism
\[
\Phi_{\mathrm{KZ}}:\Zcal_{\Acal}\xrightarrow{\ \sim\ }\Zcal/\zeta(2)\Zcal
\]
such that for any $\ww{s}\in\Ical$,
\[\Phi_{\mathrm{KZ}}(\zeta_{\Acal}(\ww{s}))=\zeta_{\Scal}(\ww{s}).
\]
\end{conj}
It is known by a theorem of Yasuda \cite{Yas16} that the symmetric multiple zeta values
span $\Zcal/\zeta(2)\Zcal$, so the conjectural map is automatically
surjective once it is well-defined. A further connection between these two sides was found by Bachmann,
Takeyama, and Tasaka \cite{BTT18}, who considered \emph{finite multiple harmonic $q$-series}
at roots of unity and showed that the real parts of their analytic limits yield symmetric
multiple zeta values, whereas their algebraic reductions yield finite
multiple zeta values, providing further evidence for the Kaneko--Zagier
conjecture.

\subsection{Function Field Counterpart}
We now turn to the function field setting. Let
$p$ be a prime number and let $q$ be a power of $p$. We let
$A=\FF_q[\theta]$ denote the polynomial ring over $\FF_q$ in the variable
$\theta$, and let $K=\FF_q(\theta)$ be its field of fractions. Let
$K_\infty=\FF_q(\!(1/\theta)\!)$ be the completion of $K$ at the infinite
place and $\CC_\infty$ the completion of an algebraic closure of
$K_\infty$. We also let $\ov{K}$ denote the algebraic closure of $K$ in
$\CC_\infty$.

Thakur introduced the function field analogue of multiple zeta values
$\zeta_A(\ww{s})$ in \cite{Tha04}. Throughout this paper, we call them
the \emph{$\infty$-adic multiple zeta values}. Chang \cite{Cha14} later
introduced the \emph{Carlitz multiple polylogarithms}, whose special
values $\Li_{\ww{s}}(\mathbf{1})$ will also be considered below. Here
$\mathbf{1}$ denotes the tuple $(1,\ldots,1)$ of the appropriate depth.
More recently, Mishiba \cite{Mis26} introduced the corresponding
$\star$-inverse values $\zeta_A^{\ant}(\ww{s})$ and
$\Li_{\ww{s}}^\ant(\mathbf{1})$. (We refer to \S\ref{sec:algebraic-setup} for the precise definitions.)

Put $L_1=\theta-\theta^q$. We let $\Zcal_{\infty}$ denote the $K$-algebra generated by the
$\infty$-adic multiple zeta values $\zeta_A(\ww{s})$, with
$\ww{s}\in\Ical$. For each $w\geq0$, we also write
\[
\Zcal_{\infty,w}
=
\Span_K
\left\{
\zeta_A(\ww{s})
:
\ww{s}\in\Ical,\ \wt(\ww{s})=w
\right\}.
\]

We next recall the \emph{finite multiple zeta values over $K$},
denoted by $\zeta_{\Acal_K}(\ww{s})$, which were studied in
\cite{CM17,Shi18}. Let $v$ run through the monic irreducible
polynomials of $A$, and put
\[
\Acal_K
=
\left(\prod_v A/(v)\right)
\bigg/
\left(\bigoplus_v A/(v)\right).
\]
There is a natural embedding $K\hookrightarrow\Acal_K$, and we regard
$\Acal_K$ as a $K$-algebra. Chang and Mishiba introduced the
corresponding special values of \emph{finite Carlitz multiple
polylogarithms} $\Li_{\Acal_K,\ww{s}}(\mathbf{1})$ in \cite{CM17}.
We will also use the corresponding $\star$-inverse values $\zeta^\ant_{\Acal_K}(\ww{s})$ and $\Li^\ant_{\Acal_K,\ww{s}}(\mathbf{1})$. (We refer to \S\ref{sec:algebraic-setup} for the precise definitions.)

We write $\Zcal_{\Acal_K}$ for the $K$-algebra generated by all finite
multiple zeta values $\zeta_{\Acal_K}(\ww{s})$, with
$\ww{s}\in\Ical$. For $w\geq0$, put
\[
\Zcal_{\Acal_K,w}
=
\Span_K
\left\{
\zeta_{\Acal_K}(\ww{s})
:
\ww{s}\in\Ical,\ \wt(\ww{s})=w
\right\}.
\]

Recently, the author introduced in \cite{Tsu26} the
\emph{finite multiple harmonic $u$-series} using the Carlitz module.
When evaluated at Carlitz torsion points, their analytic limits recover
the $\infty$-adic multiple zeta values, while their algebraic reductions
recover the finite multiple zeta values over $K$
\cite[Theorems~3.18 and~3.19]{Tsu26}. This provides a
positive-characteristic analogue of the result of Bachmann, Takeyama,
and Tasaka \cite{BTT18}, in which finite multiple harmonic $q$-series
at roots of unity connect finite and symmetric multiple zeta values.

Motivated by the Kaneko--Zagier conjecture, the author proposed a closely related
comparison in \cite[Question~3.24 and Remark~3.25]{Tsu26}. We put
\[
\gfk_{\Acal_K}
=
\zeta_{\Acal_K}(q)
-
L_1\zeta_{\Acal_K}(1,q-1)
\in\Zcal_{\Acal_K},
\]
the finite element arising from Thakur's fundamental relation
\cite[Theorem~5]{Tha09}. The corresponding comparison conjecture may be formulated
as follows.

\begin{conj}[{cf. \cite[Question~3.24 and Remark~3.25]{Tsu26}}]
\label{conj:ffKZ}
There exists a $K$-algebra isomorphism
\[
\Phi_{\infty,\Acal_K}:
{\Zcal_{\infty}}
/{\zeta_A(q-1)\Zcal_{\infty}}
\xrightarrow{\ \sim\ }
{\Zcal_{\Acal_K}}/
{\gfk_{\Acal_K}\Zcal_{\Acal_K}}
\]
such that, for every $\ww{s}\in\Ical$,
\[
\Phi_{\infty,\Acal_K}
\left(
\zeta_A(\ww{s})
\bmod \zeta_A(q-1)\Zcal_{\infty}
\right)
=
\zeta_{\Acal_K}(\ww{s})
\bmod \gfk_{\Acal_K}\Zcal_{\Acal_K}.
\]
\end{conj}

\subsection{Main Results}

We now state the main results of this paper. Our main result establishes a new comparison between
$\infty$-adic and finite multiple zeta values. More precisely, we construct
a well-defined surjective comparison map from the algebra of $\infty$-adic multiple
zeta values to a quotient of the algebra of finite multiple zeta values.
This map factors through the quotient by $\zeta_A(q-1)$, providing
the well-definedness part for \cref{conj:ffKZ}.

\begin{thm}[Restated as
\cref{thm:finite-comparison-map,thm:finite-star-inverse-comparison}]
\label{thm:main-finite-comparison}
There exists a well-defined surjective $K$-algebra homomorphism
\[
\widetilde{\Phi}_{\infty,\Acal_K}:
\Zcal_{\infty}
\longrightarrow
{\Zcal_{\Acal_K}}
/{\gfk_{\Acal_K}\Zcal_{\Acal_K}}
\]
such that, for every $\ww{s}\in\Ical$,
\[
\widetilde{\Phi}_{\infty,\Acal_K}
\left(
\zeta_A(\ww{s})
\right)
=
\zeta_{\Acal_K}(\ww{s})
\bmod
\gfk_{\Acal_K}\Zcal_{\Acal_K}.
\]
Moreover, the map is compatible with the corresponding special values
of Carlitz multiple polylogarithms and with the $\star$-inverse values:
\[
\begin{aligned}
\widetilde{\Phi}_{\infty,\Acal_K}
\left(
\Li_{\ww{s}}(\mathbf{1})
\right)
&=
\Li_{\Acal_K,\ww{s}}(\mathbf{1})
\bmod
\gfk_{\Acal_K}\Zcal_{\Acal_K},\\
\widetilde{\Phi}_{\infty,\Acal_K}
\left(
\zeta_A^\ant(\ww{s})
\right)
&=
\zeta_{\Acal_K}^\ant(\ww{s})
\bmod
\gfk_{\Acal_K}\Zcal_{\Acal_K},\\
\widetilde{\Phi}_{\infty,\Acal_K}
\left(
\Li_{\ww{s}}^\ant(\mathbf{1})
\right)
&=
\Li_{\Acal_K,\ww{s}}^\ant(\mathbf{1})
\bmod
\gfk_{\Acal_K}\Zcal_{\Acal_K}.
\end{aligned}
\]
Furthermore,
\[
\zeta_A(q-1)\Zcal_{\infty}
\subseteq
\ker\widetilde{\Phi}_{\infty,\Acal_K},
\]
and hence $\widetilde{\Phi}_{\infty,\Acal_K}$ induces a surjective
$K$-algebra homomorphism
\[
\Phi_{\infty,\Acal_K}:
{\Zcal_{\infty}}
/{\zeta_A(q-1)\Zcal_{\infty}}
\longrightarrow
{\Zcal_{\Acal_K}}
/{\gfk_{\Acal_K}\Zcal_{\Acal_K}}.
\]
\end{thm}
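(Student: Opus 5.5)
The plan is to go through a presentation of $\Zcal_\infty$ by a known complete system of relations, and then check that each relation holds for the finite values modulo $\gfk_{\Acal_K}$. By Im's theorem (the function field analogue of the Brown/Hoffman basis results), the $\infty$-adic MZVs of weight $w$ have a known basis. Equivalently, $\Zcal_\infty$ is the image of a formal algebra $\Hfk$ of indices under an evaluation map $\ev_\infty$, whose kernel is generated by the binary (Todd--Ngo Dac/Im--Kim--Le--Ngo Dac--Pham) relations. Concretely:
\begin{itemize}
\item take the $K$-vector space $\Hfk$ on $\Ical$, with the $q$-stuffle product;
\item realise $\Zcal_\infty\cong\Hfk/\ker\ev_\infty$, where $\ker\ev_\infty$ is spanned by the ``$R$-relations'' coming from the fundamental relation $R_{\varepsilon}$ together with the operators $\mathcal{B}^*,\mathcal{C},\mathcal{BC}$.
\end{itemize}
Then $\widetilde\Phi_{\infty,\Acal_K}$ is defined as $\ev_{\Acal_K}$ modulo $\gfk_{\Acal_K}$, composed with $\ev_\infty^{-1}$. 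The map is automatically a $K$-algebra homomorphism, since finite MZVs also satisfy the $q$-stuffle product (both products come from the same harmonic series with $\deg$-ordered monic polynomials). It is also surjective, since each $\zeta_{\Acal_K}(\ww{s})$ is hit.

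The key step, which I expect to be the main obstacle, is well-definedness: $\ev_{\Acal_K}(\ker\ev_\infty)\subseteq\gfk_{\Acal_K}\Zcal_{\Acal_K}$. The generating relations of $\ker\ev_\infty$ are all obtained from Thakur's fundamental relation $\zeta_A(q)-L_1\zeta_A(1,q-1)=0$ by $\mathcal{BC}$-type operations that are compatible with stuffle. On the finite side, the same operations applied to $\gfk_{\Acal_K}$ should produce the corresponding combination of finite MZVs, up to terms divisible by $\gfk_{\Acal_K}$. I would attempt this through the degree-truncated power sums $S_d(\ww{s})$, which are common to both sides:
\begin{itemize}
\item $\zeta_A$ is $\sum_d S_d$;
\item $\zeta_{\Acal_K}$ at $v$ with $\deg v=n$ is $\sum_{d<n}S_d$ reduced mod $v$.
\end{itemize}
Thakur's relation holds at the level of partial sums $S_{<d}$ up to an explicit error term. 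That error is $S_{<d}(q-1)$-type, i.e. a power-sum expression which, mod $v$ with $d=\deg v$, is a multiple of $\gfk_{\Acal_K}$ (using $\sum_{\deg a<\deg v}a^{-(q-1)}\equiv$ const mod $v$). So the identities lift to finite sums with controlled boundary terms. Verifying that every boundary term lands in the ideal $\gfk_{\Acal_K}\Zcal_{\Acal_K}$ (and not merely in something larger) is the technical heart. If it proves hard, I would instead use Tsu26's finite multiple harmonic $u$-series, which specialise to both $\zeta_A$ and $\zeta_{\Acal_K}$.

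For the compatibility statements, $\Li_{\ww{s}}(\mathbf 1)$, $\zeta_A^\ant$ and $\Li^\ant_{\ww{s}}(\mathbf 1)$ are given by universal $K$-linear (respectively antipode-type, stuffle-inverse) formulas in the $\zeta_A(\ww{s})$. The same formulas, applied on the finite side, express the finite counterparts, as for Chang--Mishiba's $\Li_{\Acal_K}$. Since $\widetilde\Phi_{\infty,\Acal_K}$ is an algebra homomorphism sending $\zeta_A\mapsto\zeta_{\Acal_K}$, the compatibility follows once I check that these transition formulas are identical on both sides; the $\star$-inverse ones come from the antipode of the stuffle Hopf algebra. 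Finally, for $\zeta_A(q-1)\in\ker$, I use that $\zeta_A(q-1)$ is a $K$-multiple of $\tilde\pi^{q-1}$ and is related by Thakur's relation to $\zeta_A(q)-L_1\zeta_A(1,q-1)$ via a Carlitz-factorial identity. Alternatively, I would compute directly that $\zeta_{\Acal_K}(q-1)$ equals a $K$-multiple of $\gfk_{\Acal_K}$, since $\sum_{a \text{ monic},\deg a<n}a^{-(q-1)}\bmod v$ is explicit. This gives the induced map $\Phi_{\infty,\Acal_K}$.
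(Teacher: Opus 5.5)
\textbf{Main gap: the well-definedness step.} Your architecture is the paper's: present $\Zcal_\infty$ as $\Hfk$ modulo the span of the relations $\sA^\bullet(\ww{s};m;\ww{n})$ built from Thakur's relation $R_1$ by the operators $\mathscr{C}$, $\mathscr{BC}$, $\mathscr{B}$, truncate at $\deg v$, and show that the leftover lies in $\gfk_{\Acal_K}\Zcal_{\Acal_K}$. But you leave exactly this last step unproved, and it is the entire content of well-definedness. The mechanism you sketch is also not the one that works. Three facts are needed.
\begin{enumerate}
\item Each generator is $P+Q$ for a pair $(P,Q)$ that is a binary relation in the degree-wise sense: $\mathscr{L}_d^\bullet(P)+\mathscr{L}_{d+1}^\bullet(Q)=0$ for every $d$. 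This is what the operators preserve (CCM23, Proposition A.4). Telescoping then gives the exact identity $\mathscr{L}_{<D}^\bullet(P+Q)=-\mathscr{L}_D^\bullet(Q)$.
\item For $\ww{s}\neq\varnothing$, the operator $\mathscr{B}_{\ww{s}}^\bullet$ produces $Q=0$. So these generators vanish identically in $\Acal_K$, with no boundary term at all.
\item For $\ww{s}=\varnothing$, one has $Q=-L_1^m\alpha_{1;(q-1)}^{\bullet,m}(\ww{n})$. Multiplicativity of $\mathscr{L}_{<D}^\bullet$ gives $-\mathscr{L}_D^\bullet(Q)=\bigl(L_1\mathscr{S}_D^\bullet(1)\mathscr{L}_{<D}^\bullet([q-1])\bigr)\,L_1^{m-1}\mathscr{L}_{<D}^\bullet\bigl(\alpha_{1;(q-1)}^{\bullet,m-1}(\ww{n})\bigr)$. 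By the telescoped fundamental relation, the first factor equals $\mathscr{L}_{<D}^\bullet([q])-L_1\mathscr{L}_{<D}^\bullet([1,q-1])$, which reduces to $\gfk_{\Acal_K}$ at $D=\deg v$.
\end{enumerate}
Your description of the error as ``$S_{<d}(q-1)$-type'', together with the remark that $\sum_{\deg a<\deg v}a^{-(q-1)}$ is constant mod $v$, misses this. That sum is $\equiv0\pmod v$ (this is $\zeta_{\Acal_K}(q-1)=0$), whereas $\mathscr{S}_{\deg v}^\bullet(1)=1/L_{\deg v}$ has a simple pole at $v$. Only their product, recognized as the truncated Thakur relation, is $v$-integral with reduction $\gfk_{\Acal_K}$. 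For a general derived relation there is no a priori reason the boundary term is even $v$-integral. It is the special shape of $Q$ (first letter $1$, then $[q-1]\ast^\bullet$) that forces it into $\gfk_{\Acal_K}\Zcal_{\Acal_K}$.

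\textbf{Secondary issues.} For the $\Li$-compatibility you invoke ``universal transition formulas'' without giving them. Knowing $\Li_{\ww{s}}(\mathbf{1})=\mathscr{L}_\infty^\zeta(P)$ only as an identity in $\Zcal_\infty$ does not give $\Li_{\Acal_K,\ww{s}}(\mathbf{1})\equiv\mathscr{L}_{\Acal_K}^\zeta(P)$. That identity mixes the two realizations and is not covered by the $\zeta$-kernel argument. There are two ways to fix this. You can derive a formula valid at every truncation level, e.g.\ by expanding $\mathscr{S}_d^\Li(s)=S_d(1)^s$ through Chen's formula iterated at fixed degree $d$. Alternatively, argue as the paper does: run the well-definedness argument also for $\bullet=\Li$, and note that $\widetilde{\Phi}_{\infty,\Acal_K}^\zeta$ and $\widetilde{\Phi}_{\infty,\Acal_K}^\Li$ agree on the values indexed by $\Ical^{\mathrm{T}}$, which span $\Zcal_\infty$.

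For the $\star$-inverse values, $(\Hfk,\ast^\zeta,\Delta)$ is not a bialgebra: Chen's correction terms $\sum_j\Delta_{s,n}^{[j]}[s+n-j]\otimes[j]$ obstruct multiplicativity of deconcatenation. So there is no Hopf antipode for $\zeta$. Your idea still survives, because (C1), (C2), (C4) alone give $\ev^\ant([\ww{s}])=\ev(P_{\ww{s}})$ with the realization-independent recursion $P_{\ww{s}}=-\sum_{i\geq1}[\ww{s}[:i]]\ast^\bullet P_{\ww{s}[i+1:]}$. This route even avoids the quotient by $\zeta_A(q-1)$ used in the paper.

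Finally, your first argument for $\zeta_A(q-1)\in\ker$ is confused: $\zeta_A(q)-L_1\zeta_A(1,q-1)=0$ carries no information about $\zeta_A(q-1)$. Your fallback of computing directly is the right one, and it gives $\zeta_{\Acal_K}(q-1)=0$.
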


The proof of the theorem is based on a comparison between the $\infty$-adic and finite realizations. More precisely, using the explicit generators for the kernel of the $\infty$-adic realization (see \S\ref{sec:linear-relations}), we show that
their finite realizations vanish modulo
$\gfk_{\Acal_K}\Zcal_{\Acal_K}$ (see \cref{lem:finite-CCM-relations}). This shows the well-definedness of the map. For the compatibility with the $\star$-inverse values, we use the formalism of evaluation maps in
\cite[\S3.1]{TsuCoaction26}.

As an application of \cref{thm:main-finite-comparison}, we also obtain an upper
bound for the dimension of $\Zcal_{\Acal_K,w}$ (see
\cref{thm:finite-dimension-upper-bound}).

The rest of the paper is organized as follows. In \S\ref{sec:preliminary}, we recall the necessary notation and background.
In \S\ref{sec:formalism}, we recall the formalism of evaluation maps
that will be used in the sequel.
In \S\ref{sec:finite-comparison}, we construct the comparison map from
$\infty$-adic to finite multiple zeta values and prove its compatibility
with the two realizations and their $\star$-inverse counterparts.
Finally, in \S\ref{sec:dimension}, we compare the dimension problems in
the $\infty$-adic, $v$-adic, and finite settings, establish the finite
dimension upper bound, and formulate a unified comparison conjecture.
\section{Preliminaries}\label{sec:preliminary}

\subsection{Algebraic Setup}\label{sec:algebraic-setup}
We now fix the algebraic notation used throughout the paper. Recall that
\[
\Ical=\{\varnothing\}\cup\bigcup_{r\geq1}\NN^r
\]
denotes the set of indices. We let
\[
\Hfk=\Span_K\{[\ww{s}]:\ww{s}\in\Ical\}
\]
denote the free $K$-vector space generated by the
set of indices $\Ical$. For each $w\geq0$, we also define
\[
\Hfk_w
=
\Span_K
\{
[\ww{s}]
:
\ww{s}\in\Ical,\ \wt(\ww{s})=w
\}.
\]
We put $1=[\varnothing]$.
Then the concatenation map
\[
[-,-]:\Hfk\times\Hfk\rightarrow\Hfk
\]
is the unique $K$-bilinear map determined by
\[
[\ww{s},\ww{n}]
=
[s_1,\ldots,s_r,n_1,\ldots,n_\ell],
\qquad
[\varnothing,\ww{s}]
=
[\ww{s},\varnothing]
=
[\ww{s}],
\qquad
[\varnothing,\varnothing]
=
[\varnothing],
\]
for all non-empty indices
$\ww{s}=(s_1,\ldots,s_r),\ww{n}=(n_1,\ldots,n_\ell)\in\Ical$.
Iterated concatenations are understood $K$-multilinearly and are denoted by
\[
[-,\ldots,-]\colon
\Hfk\times\cdots\times\Hfk
\longrightarrow
\Hfk.
\]
We identify an index with its corresponding basis element when no
confusion can arise.

For $\ww{s}=(s_1,\ldots,s_r)\in\Ical$ and $0\leq i\leq r$, we write
\[
\ww{s}[:i]=(s_1,\ldots,s_i),
\qquad
\ww{s}[i+1:]=(s_{i+1},\ldots,s_r),
\]
with the convention that any truncation outside the indicated range is
equal to $\varnothing$. We define the deconcatenation map 
\[
\Delta:\Hfk\longrightarrow\Hfk\otimes_K\Hfk,
\]
which is the unique $K$-linear map such that 
\[
\Delta([\ww{s}])
=
\sum_{i=0}^r
[\ww{s}[:i]]
\otimes
[\ww{s}[i+1:]]
\]
for all indices $\ww{s}\in\Ical$ with $\dep(\ww{s})=r$.

We next define the realization maps used throughout the paper. For $d\geq0$ and $s\in\NN$, set
\[
S_d(s)
=
\sum_{\substack{a\in A \text{ monic}\\\deg a=d}}
\frac{1}{a^s},
\qquad
L_0=1,
\qquad
L_d=\prod_{i=1}^d(\theta-\theta^{q^i})
\quad(d\geq1).
\]
For $\bullet\in\{\zeta,\Li\}$, put
\[
\mathscr{S}_d^\zeta(s)=S_d(s),
\qquad
\mathscr{S}_d^\Li(s)=\frac{1}{L_d^s}.
\]

\begin{df}\label{df:infty-adic-realizations}
Let $\bullet\in\{\zeta,\Li\}$. For any non-empty index $\ww{s}=(s_1,\ldots,s_r)\in\Ical$, define
\begin{align*}
\mathscr{L}_\infty^\bullet([\ww{s}])
&=
\sum_{d_1>\cdots>d_r\geq0}
\prod_{i=1}^r\mathscr{S}_{d_i}^\bullet(s_i)\in K_\infty,\\
\mathscr{L}_\infty^{\bullet,\ant}([\ww{s}])
&=
(-1)^r
\sum_{0\leq d_1\leq\cdots\leq d_r}
\prod_{i=1}^r\mathscr{S}_{d_i}^\bullet(s_i)\in K_\infty.
\end{align*}
Both maps take $1=[\varnothing]$ to $1$ and are extended $K$-linearly
to $\Hfk$.
\end{df}

For every $\ww{s}\in\Ical$, we have
\[
\mathscr{L}_\infty^\zeta([\ww{s}])=\zeta_A(\ww{s}),
\qquad
\mathscr{L}_\infty^{\zeta,\ant}([\ww{s}])=\zeta_A^\ant(\ww{s}),
\]
and
\[
\mathscr{L}_\infty^\Li([\ww{s}])=\Li_{\ww{s}}(\mathbf{1}),
\qquad
\mathscr{L}_\infty^{\Li,\ant}([\ww{s}])
=
\Li_{\ww{s}}^\ant(\mathbf{1}).
\]
Moreover, $\Zcal_{\infty,w}$ is also spanned by
$\mathscr{L}_\infty^\Li([\ww{s}])=\Li_{\ww{s}}(\mathbf{1})$ with
$\wt(\ww{s})=w$ (see \cite[Corollary~3.7]{CCM23}).

\begin{df}\label{df:finite-realizations}
Let $\bullet\in\{\zeta,\Li\}$. For any non-empty index $\ww{s}=(s_1,\ldots,s_r)\in\Ical$, define
\[
\mathscr{L}_{\Acal_K}^\bullet([\ww{s}])
=
\left(
\sum_{\deg v>d_1>\cdots>d_r\geq0}
\prod_{i=1}^r\mathscr{S}_{d_i}^\bullet(s_i)
\bmod v
\right)_v
\in\Acal_K
\]
and
\[
\mathscr{L}_{\Acal_K}^{\bullet,\ant}([\ww{s}])
=
\left(
(-1)^r
\sum_{0\leq d_1\leq\cdots\leq d_r<\deg v}
\prod_{i=1}^r\mathscr{S}_{d_i}^\bullet(s_i)
\bmod v
\right)_v
\in\Acal_K.
\]
Both maps take $1=[\varnothing]$ to $1$ and are extended $K$-linearly
to $\Hfk$.
\end{df}

For every $\ww{s}\in\Ical$, we have
\[
\mathscr{L}_{\Acal_K}^\zeta([\ww{s}])
=
\zeta_{\Acal_K}(\ww{s}),
\qquad
\mathscr{L}_{\Acal_K}^{\zeta,\ant}([\ww{s}])
=
\zeta_{\Acal_K}^{\ant}(\ww{s}),
\]
and
\[
\mathscr{L}_{\Acal_K}^\Li([\ww{s}])
=
\Li_{\Acal_K,\ww{s}}(\mathbf{1}),
\qquad
\mathscr{L}_{\Acal_K}^{\Li,\ant}([\ww{s}])
=
\Li_{\Acal_K,\ww{s}}^\ant(\mathbf{1}).
\]
By Carlitz's formula
\[
\mathscr{S}_d^\zeta(s)=\mathscr{S}_d^\Li(s)
\qquad (1\leq s\leq q)
\]
(see \cite[Theorem~5.9.1]{Tha04}), the element
\[
\gfk_{\Acal_K}
=
\mathscr{L}_{\Acal_K}^\bullet([q])
-
L_1\mathscr{L}_{\Acal_K}^\bullet([1,q-1])
\]
is independent of the choice of
$\bullet\in\{\zeta,\Li\}$.

\subsection{Shuffle and Stuffle Products}
We next recall the two products on $\Hfk$ corresponding to the
$\zeta$- and $\Li$-realizations. In \cite{Tha10}, Thakur proved the
existence of the \textit{$q$-shuffle relations}, showing that the product of two
$\infty$-adic multiple zeta values can be expressed as an
$\FF_p$-linear combination of $\infty$-adic multiple zeta values of the
same total weight. In depth one, Chen later obtained the following explicit
formula in \cite{Che15}:
\[
\begin{aligned}
\zeta_A(s)\zeta_A(n)
={}&
\zeta_A(s,n)
+\zeta_A(n,s)
+\zeta_A(s+n)
+
\sum_{\substack{1\leq j<s+n\\ q-1\mid j}}
\Delta_{s,n}^{[j]}
\zeta_A(s+n-j,j)
\end{aligned}
\]
for $s,n\in\NN$. Here, we write
\[
\Delta_{s,n}^{[j]}
=
\begin{cases}
\displaystyle
(-1)^{s-1}\binom{j-1}{s-1}
+
(-1)^{n-1}\binom{j-1}{n-1},
& q-1\mid j,\\[6pt]
0,
& \text{otherwise},
\end{cases}
\]
for $s,n,j\in\NN$.
Based on Chen's formula, Yamamoto observed a recursive construction of
the $q$-shuffle product, which was later formulated and proved by Shi
(see \cite[Definition~3.1.3 and Theorem~3.1.4]{Shi18}).
On the other hand, the corresponding product on the special
values of Carlitz multiple polylogarithms $\Li_{\ww{s}}(\mathbf{1})$ is the usual stuffle product
\cite{Cha14}. 

For non-empty $\ww{s}=(s_1,\ldots,s_r)\in\Ical$, we put
\[
\ww{s}^{-}=(s_2,\ldots,s_r),
\qquad
\varnothing^-=\varnothing.
\]
Then we consider the following definition.

\begin{df}\label{df:bullet-stuffle}
Let $\bullet\in\{\zeta,\Li\}$.
We define two $K$-bilinear products on $\Hfk$, the $q$-shuffle product
$\ast^\zeta$ and the stuffle product $\ast^\Li$, by
\[
1\ast^\bullet P
=
P\ast^\bullet1
=
P
\]
and, for non-empty indices
$\ww{s}=(s_1,\ldots,s_r)$ and
$\ww{n}=(n_1,\ldots,n_\ell)$, by
\begin{equation}\label{eq:bullet-stuffle}
\begin{aligned}
\ww{s}\ast^\bullet\ww{n}
={}&
[s_1,\ww{s}^-\ast^\bullet\ww{n}]
+[n_1,\ww{s}\ast^\bullet\ww{n}^-]\\
&+
[s_1+n_1,\ww{s}^-\ast^\bullet\ww{n}^-]
+D_{\ww{s}}^\bullet(\ww{n}).
\end{aligned}
\end{equation}
Here we put $D_{\ww{s}}^\Li(\ww{n})=0$, and
\begin{equation}\label{eq:D-zeta-definition}
D_{\ww{s}}^\zeta(\ww{n})
=
\sum_{j=1}^{s_1+n_1-1}
\Delta_{s_1,n_1}^{[j]}
[s_1+n_1-j,
[j]\ast^\zeta(\ww{s}^-\ast^\zeta\ww{n}^-)].
\end{equation}
For every non-empty $\ww{s}\in\Ical$, we set
$D_{\ww{s}}^\bullet(1)=0$ and extend
$D_{\ww{s}}^\bullet(-)$ to $\Hfk$ $K$-linearly.
For convenience, we write
\[
D_c^\bullet(-)=D_{(c)}^\bullet(-),
\qquad
D_{\ww{s}}(-)=D_{\ww{s}}^\zeta(-).
\]
\end{df}

The products $\ast^\zeta$ and $\ast^\Li$ on $\Hfk$ are compatible with the
corresponding $\infty$-adic realizations as follows.

\begin{thm}[{\cite{Cha14,Shi18}; see also
\cite[Proposition~2.7]{CCM23}}]
\label{thm:strict-bullet-character}
Let $\bullet\in\{\zeta,\Li\}$.
For any $P,Q\in\Hfk$, we have
\[
\mathscr{L}_\infty^\bullet(P\ast^\bullet Q)
=
\mathscr{L}_\infty^\bullet(P)
\mathscr{L}_\infty^\bullet(Q).
\]
\end{thm}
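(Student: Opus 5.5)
The proof reduces to a single identity for power sums, $\mathscr{S}_d^\bullet(s)\mathscr{S}_d^\bullet(n)=\mathscr{S}_d^\bullet(s+n)+\sum_j\Delta_{s,n}^{[j]}\,\cdot\,(\text{lower-degree terms})$. We argue by induction on $\dep(\ww{s})+\dep(\ww{n})$ using the recursion \eqref{eq:bullet-stuffle}. By bilinearity it suffices to treat basis elements $P=[\ww{s}]$ and $Q=[\ww{n}]$; the case where one of them is $1$ is trivial.

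First we decompose each series by its leading term. For non-empty $\ww{s}$, write
\[
\mathscr{L}^\bullet_\infty([\ww{s}])=\sum_{d\ge0}\mathscr{S}^\bullet_d(s_1)\,\mathscr{L}^\bullet_{<d}([\ww{s}^-]),
\]
where $\mathscr{L}^\bullet_{<d}$ denotes the truncated series with $d>d_2$. It is cleaner to prove the stronger statement that the truncated maps $\mathscr{L}^\bullet_{<N}$ are multiplicative for every $N$, and then let $N\to\infty$.

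Next we multiply two truncated sums and split the pairs of leading degrees $(d,e)$ into the cases $d>e$, $e>d$ and $d=e$. The cases $d>e$ and $e>d$ give $[s_1,\ww{s}^-\ast\ww{n}]$ and $[n_1,\ww{s}\ast\ww{n}^-]$ by the induction hypothesis applied to the truncations at $d$ (respectively $e$). The diagonal case $d=e$ gives
\[
\sum_d\mathscr{S}_d(s_1)\mathscr{S}_d(n_1)\,\mathscr{L}_{<d}(\ww{s}^-)\mathscr{L}_{<d}(\ww{n}^-).
\]

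For $\bullet=\Li$ we have $\mathscr{S}^\Li_d(s_1)\mathscr{S}^\Li_d(n_1)=\mathscr{S}^\Li_d(s_1+n_1)$ directly, so the diagonal term is $[s_1+n_1,\ww{s}^-\ast\ww{n}^-]$ and the proof is complete.

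For $\bullet=\zeta$ the main obstacle is the needed identity
\[
S_d(s)S_d(n)=S_d(s+n)+\sum_j\Delta^{[j]}_{s,n}\,S_d(s+n-j)\sum_{e<d}S_e(j).
\]
This is the depth-one content of Chen's formula; it follows from partial fractions over the monic $a\ne b$ of degree $d$, using that $\sum_{\deg c<d}c^{-j}$ is governed by $q-1\mid j$. Substituting it in, the extra terms contribute
\[
\sum_d S_d(s+n-j)\,\mathscr{L}_{<d}([j])\,\mathscr{L}_{<d}(\ww{s}^-\ast\ww{n}^-),
\]
and the induction hypothesis for the truncations (applied to $[j]$ and $\ww{s}^-\ast\ww{n}^-$, which have smaller total depth) converts this into $[s_1+n_1-j,[j]\ast(\ww{s}^-\ast\ww{n}^-)]$, which is exactly $D_{\ww{s}}(\ww{n})$. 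Some care is needed to set up the induction so that it runs on the depth of the pair while weight is allowed to vary inside $[j]\ast(\cdots)$. Otherwise this is the argument of Shi \cite{Shi18}, and we would cite that argument for the depth-one identity.
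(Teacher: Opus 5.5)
Your proposal is correct and matches the argument behind the results the paper cites; the paper gives no proof of its own. You prove multiplicativity of the truncations $\mathscr{L}^\bullet_{<N}$ by induction on $\dep(\ww{s})+\dep(\ww{n})$, treat the diagonal $d=e$ via $L_d^{-s}L_d^{-n}=L_d^{-(s+n)}$ or Chen's degree-wise partial-fraction identity, and then let $N\to\infty$, which is the truncated multiplicativity of Shi and Chang--Chen--Mishiba (Proposition~2.7) that the paper itself later invokes. The one point to state explicitly is that every term of $\ww{s}^-\ast^\zeta\ww{n}^-$ has depth at most $\dep(\ww{s}^-)+\dep(\ww{n}^-)$, which follows by an easy induction on the recursion and guarantees that the pair $([j],\ww{s}^-\ast^\zeta\ww{n}^-)$ has strictly smaller total depth.
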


The finite realizations satisfy the same product formulas.
More precisely, we have the following theorem.
\begin{thm}[{\cite[\S3.1]{CM17} and \cite[\S4]{Shi18}}]\label{thm:finite-character}
   Let $\bullet\in\{\zeta,\Li\}$.
For any $P,Q\in\Hfk$, we have
\[
\mathscr{L}_{\Acal_K}^\bullet(P\ast^\bullet Q)
=
\mathscr{L}_{\Acal_K}^\bullet(P)
\mathscr{L}_{\Acal_K}^\bullet(Q).
\]
\end{thm}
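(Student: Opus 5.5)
The plan is to verify the product formula directly on basis elements. Both sides are $K$-bilinear in $(P,Q)$, so it suffices to treat $P=[\ww{s}]$ and $Q=[\ww{n}]$ with $\ww{s},\ww{n}\in\Ical$. If either index is empty the claim is trivial, since $1\ast^\bullet P=P$ and $\mathscr{L}_{\Acal_K}^\bullet(1)=1$.

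For non-empty indices I would fix a monic irreducible $v$ of degree $N$. The idea is to prove a stronger, truncated statement for every $N\geq0$, namely
\[
\mathscr{L}^\bullet_{<N}(\ww{s}\ast^\bullet\ww{n})=\mathscr{L}^\bullet_{<N}(\ww{s})\,\mathscr{L}^\bullet_{<N}(\ww{n})
\]
as an identity in $K$, where $\mathscr{L}^\bullet_{<N}([\ww{s}])=\sum_{N>d_1>\cdots>d_r\geq0}\prod_i\mathscr{S}^\bullet_{d_i}(s_i)$. Reducing this identity modulo $v$ with $N=\deg v$ then gives the theorem. This reduction is legitimate because the denominators involved are monic polynomials of degree $<\deg v$, or powers of $L_d$ with $d<\deg v$, and all of these are units mod $v$.

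I would prove the truncated identity by induction on $\dep(\ww{s})+\dep(\ww{n})$, and for fixed depths by induction on $N$. Write $T_N(\ww{s})=\mathscr{L}^\bullet_{<N}(\ww{s})$. Splitting off the largest degree gives the recursion
\[
T_{N+1}(\ww{s})=T_N(\ww{s})+\mathscr{S}_N^\bullet(s_1)\,T_N(\ww{s}^-).
\]
Expanding $T_{N+1}(\ww{s})T_{N+1}(\ww{n})$ produces four kinds of terms. The first, $T_N(\ww{s})T_N(\ww{n})$, is handled by induction on $N$. The two cross terms $\mathscr{S}_N(s_1)T_N(\ww{s}^-)T_N(\ww{n})$ and its mirror are handled by the depth induction, and they match the top-degree parts of $[s_1,\ww{s}^-\ast\ww{n}]$ and $[n_1,\ww{s}\ast\ww{n}^-]$. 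The last term is $\mathscr{S}_N(s_1)\mathscr{S}_N(n_1)T_N(\ww{s}^-)T_N(\ww{n}^-)$.

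For $\bullet=\Li$ this last term is simply $L_N^{-(s_1+n_1)}$, so it matches $[s_1+n_1,\ww{s}^-\ast\ww{n}^-]$ and the stuffle case closes. For $\bullet=\zeta$ I need the depth-one truncated identity
\[
S_N(s)S_N(n)=S_N(s+n)+\sum_j\Delta^{[j]}_{s,n}\,S_N(s+n-j)\,T_N(j).
\]
This is the degree-$N$ piece of Chen's formula, and it holds as an identity of power sums at each fixed degree (Chen's proof is degree-wise). Substituting it and applying induction to $[j]\ast^\zeta(\ww{s}^-\ast^\zeta\ww{n}^-)$ accounts exactly for $D^\zeta_{\ww{s}}(\ww{n})$. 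For this step I need the truncated version of \cref{thm:strict-bullet-character} for lower-depth products, which the induction supplies.

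The main obstacle is justifying the degree-wise Chen identity at finite $N$. I would take it from Chen's or Thakur's proof, where the relation $S_d(s)S_d(n)-S_d(s+n)=\sum_j\Delta^{[j]}_{s,n}S_d(s+n-j)\sum_{e<d}S_e(j)$ is established for each $d$. This is essentially the content of \cite[\S4]{Shi18}.
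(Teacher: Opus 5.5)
Your proposal is correct and is essentially the standard argument behind this theorem. The paper gives no proof of its own and imports the result from Chang--Mishiba and Shi, where it is established exactly as you describe: prove multiplicativity of the truncated sums $\mathscr{L}^\bullet_{<N}$ in $K$ for every $N$, then reduce modulo $v$ at $N=\deg v$. For the stuffle product the truncated identity is immediate; for the $q$-shuffle product it comes from the degree-wise identity $S_N(s)S_N(n)-S_N(s+n)=\sum_j\Delta^{[j]}_{s,n}S_N(s+n-j)\,\mathscr{L}^\zeta_{<N}([j])$. The reduction is valid because the denominators are monic of degree $<\deg v$, or powers of $L_d$ with $d<\deg v$, all units mod $v$. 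The same truncated multiplicativity (cf.\ \cite[Proposition~2.7]{CCM23}) is what the paper itself uses in the proof of \cref{lem:finite-CCM-relations}, so your route fits the paper's framework.
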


\subsection{Linear Relations}\label{sec:linear-relations}
We next recall the basis theorem for $\infty$-adic multiple zeta values
that will be used in the finite comparison. In \cite[Conjecture~7.1]{Tod18}, Todd first conjectured a
formula for $\dim_K\Zcal_{\infty,w}$ for each weight. Later, Thakur predicted an
explicit basis indexed by the following set
(see \cite[Conjecture~8.2]{Tha17}):
\[
\Ical_w^{\mathrm{T}}
=
\begin{cases}
\{\varnothing\},
& w=0,\\[1mm]
\left\{
(s_1,\ldots,s_r)\in\Ical :
s_i\leq q\ (1\leq i<r),
\ s_r<q,\ \wt(\ww{s})=w
\right\},
& w\geq1.
\end{cases}
\]
Let
\[
\Ical^{\mathrm{T}}
=
\bigcup_{w\geq0}\Ical_w^{\mathrm{T}}.
\]
For $\ww{s}\in\Ical^{\mathrm{T}}$, one has (see \cite[(2.7)]{CCM23})
\[
\zeta_A(\ww{s})
=
\mathscr{L}_\infty^\zeta([\ww{s}])
=
\mathscr{L}_\infty^\Li([\ww{s}])
=
\Li_{\ww{s}}(\mathbf{1}).
\]
Ngo Dac proved that these values span all
multiple zeta values over $K$
(see \cite[Theorem~A]{ND21}), and the resulting basis conjecture was later
proved independently in \cite[Theorem~1.5]{CCM23} and
\cite[Theorem~B]{IKLNP24}. Precisely, we have the following theorem.

\begin{thm}[{\cite[Theorem~1.5]{CCM23},
\cite[Theorem~B]{IKLNP24}}]
\label{thm:basis_for_infty}
Let $\bullet\in\{\zeta,\Li\}$.
Then for each $w\geq0$,
\[
\{
\mathscr{L}_\infty^\bullet([\ww{s}])
:
\ww{s}\in\Ical_w^{\mathrm{T}}
\}
\]
forms a $K$-basis of $\Zcal_{\infty,w}$.
\end{thm}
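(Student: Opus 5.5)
This theorem is cited from \cite[Theorem~1.5]{CCM23} and \cite[Theorem~B]{IKLNP24}, so within the paper it is taken as given. A proof has two parts: \emph{spanning}, that every $\zeta_A(\ww{s})$ of weight $w$ lies in the $K$-span of the Thakur-type values, and \emph{linear independence} of these values. I would first reduce to one choice of $\bullet$. For $\ww{s}\in\Ical^{\mathrm{T}}$ the two realizations agree (by \cite[(2.7)]{CCM23}, using Carlitz's formula $\mathscr{S}^\zeta_d(s)=\mathscr{S}^\Li_d(s)$ for $s\le q$). Both families also span $\Zcal_{\infty,w}$ (\cite[Corollary~3.7]{CCM23}). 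So it suffices to treat $\bullet=\Li$.

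\textbf{Spanning.} I would follow Ngo Dac \cite[Theorem~A]{ND21} and run an algorithm on $\Hfk_w$. Take an index $\ww{s}$ with some entry $s_i>q$, or with last entry $s_r\ge q$. Apply the power-sum identity $S_d(s)=S_d(s-q)\,S_d(q)$-type relations, i.e.\ Thakur's relation $S_d(q)=S_d(1)^q$ together with $S_d(q)-L_1\sum_{e<d}S_d(1)S_e(q-1)=0$ and its shifts. These let us rewrite $\ww{s}$ as a $K$-combination of indices that are smaller in a suitable well-founded order, for instance lexicographic on $(\text{depth reversed},\,\text{entries})$. The order must be chosen so the process terminates inside $\Ical^{\mathrm{T}}_w$. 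Counting then gives $\#\Ical^{\mathrm{T}}_w$ as an upper bound for the dimension, matching Todd's recursion.

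\textbf{Independence.} This is the main obstacle. I would use the Anderson--Brownawell--Papanikolas criterion, through Chang's realization of $\Li_{\ww{s}}(\mathbf{1})$ as periods of explicit $t$-motives of MZV type \cite{Cha14}. First build the Frobenius difference system whose solution matrix has entries the deformations $\Li_{\ww{s}}(\mathbf{1})$ in $t$. Then argue by induction on weight and on depth. Suppose a nontrivial $K$-linear relation among the weight-$w$ basis candidates exists. ABP lifts it to a relation over $\ov{K}[t]$ among the $\Omega$-deformed series. Comparing leading depth terms yields a relation of strictly smaller weight among the candidates (the "depth-peeling" of CCM), and by induction that smaller relation must be trivial. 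The delicate point is handling indices whose entries equal $q$. There $\zeta_A(q)$ is related to $\tilde\pi^q$, and the induced sub-relation involves $L_1\zeta_A(1,q-1)$. One must check that the restrictions $s_i\le q$ and $s_r<q$ exactly eliminate these degenerate cases, so that the induction closes. I would verify this using the explicit $\Delta$ (deconcatenation) structure of the $t$-motive's period matrix.
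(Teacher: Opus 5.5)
Within the paper this theorem has no proof of its own. It is imported from \cite[Theorem~1.5]{CCM23} and \cite[Theorem~B]{IKLNP24}, exactly as your first sentence says. Your reduction to a single $\bullet$ is sound: for $\ww s\in\Ical^{\mathrm T}$ the two candidate sets coincide elementwise, and both realizations span $\Zcal_{\infty,w}$.

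The sketch you add, however, would not stand as a proof. On the spanning side:
\begin{itemize}
\item The identity $S_d(s)=S_d(s-q)S_d(q)$ is false for Thakur's power sums (take $q=2$, $d=1$, $s=3$). It is true for $\mathscr S_d^{\Li}(s)=L_d^{-s}$, which is presumably what you meant.
\item Your fundamental relation is misindexed: at $d=0$ it reads $1=0$. The correct form is $\mathscr L_d^{\bullet}([q])=L_1\mathscr L_{d+1}^{\bullet}([1,q-1])$, i.e.\ $S_d(q)=L_1S_{d+1}(1)\sum_{e\le d}S_e(q-1)$.
\item Termination inside $\Ical^{\mathrm T}_w$ is the substance of Ngo Dac's theorem (in the paper's language, propagating $R_1$ by $\mathscr B$, $\mathscr C$, $\mathscr{BC}$). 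It is not a matter of choosing some well-founded order.
\end{itemize}

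The genuine gap is linear independence. The step ``comparing leading depth terms yields a relation of strictly smaller weight among the candidates, trivial by induction'' fails for two reasons.

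First, take the standard deformation $\mathcal L_{\ww s}(t)=\sum_{i_1>\cdots>i_r\ge0}\prod_{j=1}^{r}\prod_{k=1}^{i_j}(t-\theta^{q^k})^{-s_j}$, so that $\mathcal L_{\ww s}(\theta)=\Li_{\ww s}(\mathbf 1)$, and $\Omega^{(-1)}=(t-\theta)\Omega$. For $w=\wt(\ww s)$ one has $(\Omega^{w}\mathcal L_{\ww s})^{(-1)}=\Omega^{w}\mathcal L_{\ww s}+(t-\theta)^{s_r}\Omega^{w}\mathcal L_{\ww s^{+}}$. So the system peels off the \emph{last} entry, and the next level involves the prefixes $\ww s^{+}$. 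These end in $q$ whenever $s_{r-1}=q$, so they are not candidates. They are also linearly dependent on candidates (e.g.\ $\zeta_A(q)=L_1\zeta_A(1,q-1)$), so your induction hypothesis does not apply to them.

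Second, what ABP yields at the next level is not a $K$-linear relation among candidates. It is a Frobenius difference equation whose polynomial solutions encode relations that may also involve $\tilde\pi^{w'}$. When $(q-1)\mid w'$ such relations genuinely exist, since $\tilde\pi^{w'}\in K^{\times}\zeta_A(w')\subseteq\Zcal_{\infty,w'}$; already $\zeta_A(q-1)\in K^{\times}\tilde\pi^{q-1}$ with $(q-1)\in\Ical^{\mathrm T}_{q-1}$. So independence in weight $w'$ cannot exclude them.

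You also misplace this degeneracy. $\zeta_A(q)$ is not a $K$-multiple of $\tilde\pi^{q}$ unless $q=2$, because $\zeta_A(s)\in K\tilde\pi^{s}$ exactly when $(q-1)\mid s$. Entries equal to $q$ matter because of Thakur's relation; entries equal to $q-1$ matter because of Euler--Carlitz.

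Closing the argument requires a strengthened inductive statement. It must control all $\ov K$-linear relations among $\tilde\pi^{w}$ and $\Li$-values at indices with entries $\le q$, prefixes included. It must be paired with a degree-in-$t$ analysis of the ABP coefficients that uses $s_i\le q$. Your proposal names this as the ``delicate point'' but defers it, so the independence half is not established.
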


For the comparison argument we also need the explicit generators for
the kernels of the two $\infty$-adic realizations. These relations were
developed in the framework of \cite{CCM23}. We recall the formulation
used here.

\begin{df}[{\cite[\S3]{CCM23}}]
For $c\in\NN$, $\ww{s}\in\Ical$, and
$\bullet\in\{\zeta,\Li\}$, define the $K$-linear map
\[
\alpha_{c;\ww{s}}^\bullet:
\Hfk
\longrightarrow
\Hfk,
\qquad
\alpha_{c;\ww{s}}^\bullet(P)
=
[c,\ww{s}\ast^\bullet P].
\]
For $m\geq1$, put
\[
\alpha_{c;\ww{s}}^{\bullet,m}
=
\underbrace{
\alpha_{c;\ww{s}}^\bullet
\circ\cdots\circ
\alpha_{c;\ww{s}}^\bullet
}_{m\text{ times}},
\qquad
\alpha_{c;\ww{s}}^{\bullet,0}
=
\id_{\Hfk}.
\]
We also define the $K$-bilinear map
\[
\boxplus:
\Hfk\times\Hfk
\longrightarrow
\Hfk
\]
by
\[
1\boxplus P
=
P\boxplus1
=
0
\]
and
\[
\ww{s}\boxplus\ww{n}
=
[s_1,\ldots,s_{r-1},s_r+n_1,n_2,\ldots,n_\ell]
\]
for non-empty indices
$\ww{s}=(s_1,\ldots,s_r)$ and
$\ww{n}=(n_1,\ldots,n_\ell)\in\Ical$.
Here $0$ denotes the zero vector of $\Hfk$, rather than the empty word
$1=[\varnothing]$.
\end{df}

For non-empty $\ww{s}=(s_1,\ldots,s_r)\in\Ical$, we let
\[
\ww{s}^{+}=(s_1,\ldots,s_{r-1}),
\qquad
\varnothing^+=\varnothing.
\]
Also, for $c\in\NN$ and $m\geq1$, we write
$\{c\}^m=(c,\ldots,c)$ for the index consisting of $m$ copies of $c$
and $\{c\}^0=\varnothing$.

\begin{df}[{\cite[\S2]{Mis26}; see also \cite[\S3]{CCM23}}]
\label{df:Mishiba-relations}
Let $\bullet\in\{\zeta,\Li\}$.
Put $\varepsilon_\zeta=1$ and $\varepsilon_\Li=0$.
For $\ww{s},\ww{n}\in\Ical$ and $m\geq1$, define
\[
\begin{aligned}
\sA^\bullet(\ww{s};m;\ww{n})
={}&
[\ww{s},\{q\}^m,\ww{n}]
+
[\ww{s},\{q\}^m\boxplus\ww{n}]
+
\varepsilon_\bullet
[\ww{s},\{q\}^{m-1},D_q(\ww{n})]\\
&-
L_1^m
[\ww{s},\alpha_{1;(q-1)}^{\bullet,m}(\ww{n})]
-
L_1^m
[\ww{s}]
\boxplus
\alpha_{1;(q-1)}^{\bullet,m}(\ww{n})\\
&-
\varepsilon_\bullet L_1^m
[\ww{s}^+,
D_{s_r}
(\alpha_{1;(q-1)}^{\bullet,m}(\ww{n}))],
\end{aligned}
\]
if $\ww{s}=(s_1,\ldots,s_r)$ is non-empty, and
\[
\begin{aligned}
\sA^\bullet(\varnothing;m;\ww{n})
={}&
[\{q\}^m,\ww{n}]
+
[\{q\}^m\boxplus\ww{n}]
+
\varepsilon_\bullet
[\{q\}^{m-1},D_q(\ww{n})]
-
L_1^m
\alpha_{1;(q-1)}^{\bullet,m}(\ww{n}).
\end{aligned}
\]
We also set
\[
\mathscr{R}^\bullet
=
\Span_K
\left\{
\sA^\bullet(\ww{s};m;\ww{n})
:
\ww{s},\ww{n}\in\Ical,\ m\geq1
\right\}
\subseteq
\Hfk
\]
and
\[
\mathscr{R}_{w}^\bullet
=
\Span_K
\left\{
\sA^\bullet(\ww{s};m;\ww{n})
:
\begin{array}{c}
\ww{s},\ww{n}\in\Ical,\ m\geq1,\\
\wt(\ww{s})+mq+\wt(\ww{n})=w
\end{array}
\right\}
\subseteq
\Hfk_w.
\]
\end{df}

These relations generate precisely the kernels of the corresponding
$\infty$-adic realization maps.

\begin{prop}[{\cite[Proposition~2.3.2]{Mis26}; see also \cite[\S3]{CCM23}}]
\label{prop:Mishiba-kernel}
For every $\bullet\in\{\zeta,\Li\}$,
$\ww{s},\ww{n}\in\Ical$, and $m\geq1$, we have
\[
\mathscr{L}_\infty^\bullet
\left(
\sA^\bullet(\ww{s};m;\ww{n})
\right)
=
0.
\]
Moreover, for all $w\geq0$,
\[
\ker
\left(
\mathscr{L}_\infty^\bullet|_{\Hfk_w}:
\Hfk_w
\longrightarrow
\Zcal_{\infty}
\right)
=
\mathscr{R}_{w}^\bullet,
\qquad
\ker
\left(
\mathscr{L}_\infty^\bullet:
\Hfk
\longrightarrow
\Zcal_{\infty}
\right)
=
\mathscr{R}^\bullet.
\]
\end{prop}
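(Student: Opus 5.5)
The proof splits into two parts: first the vanishing $\mathscr{L}_\infty^\bullet(\sA^\bullet(\ww{s};m;\ww{n}))=0$, then the identification of the kernel through a spanning argument combined with \cref{thm:basis_for_infty}.

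\textbf{Vanishing.} For the vanishing I would begin with a \emph{truncated} version of Thakur's fundamental relation $\zeta_A(q)=L_1\zeta_A(1,q-1)$ from \cite[Theorem~5]{Tha09}. The aim is an identity of partial sums over all degrees below a bound $d$, schematically
\[
\sum_{d>e\geq 0}\mathscr{S}^\bullet_e(q)\;-\;L_1\sum_{d>e_1>e_2\geq0}\mathscr{S}^\bullet_{e_1}(1)\mathscr{S}^\bullet_{e_2}(q-1)
=\text{(boundary term in degree $d$)}.
\]
I would prove it directly from $L_{e+1}=L_e(\theta-\theta^{q^{e+1}})$ and Carlitz's formula $\mathscr{S}_e^\zeta(s)=\mathscr{S}_e^\Li(s)$ for $s\le q$; this makes it a telescoping computation. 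The boundary term is exactly what produces the $\boxplus$-terms in $\sA^\bullet$, since it merges the last $q$ with the adjacent entry.

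Next I would attach a tail $\ww{n}$ below. The inner nested sums then get multiplied together, and \cref{thm:strict-bullet-character}, applied at the level of truncated sums, converts these products into $\ast^\bullet$-products. For $\bullet=\zeta$ the correction $D_q$ enters through Chen's formula, and this is where the term $\varepsilon_\bullet[\ww{s},\{q\}^{m-1},D_q(\ww{n})]$ comes from. Iterating the one-step identity $m$ times, by induction on $m$, replaces $\{q\}^m$ by $L_1^m\alpha^{\bullet,m}_{1;(q-1)}(\ww{n})$. Finally I would attach the prefix $\ww{s}$ above by summing over the outer degrees. The boundary term at the junction gives $[\ww{s}]\boxplus(\cdot)$ together with the $D_{s_r}$ correction. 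Letting $d\to\infty$, the remaining boundary terms tend to $0$ $\infty$-adically, which gives the vanishing.

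\textbf{Kernel.} The vanishing already gives $\mathscr{R}_w^\bullet\subseteq\ker$. For the reverse inclusion I would prove
\[
\Hfk_w=\mathscr{R}_w^\bullet+\Span_K\{[\ww{s}]:\ww{s}\in\Ical^{\mathrm T}_w\}.
\]
Granting this, \cref{thm:basis_for_infty} shows that $\mathscr{L}_\infty^\bullet$ is injective on the second summand. Hence any kernel element, written as $R+T$ with $R\in\mathscr{R}^\bullet_w$ and $T$ in the second summand, satisfies $\mathscr{L}_\infty^\bullet(T)=0$, so $T=0$. The statement for all of $\Hfk$ then follows by summing over the weight grading, since $\sA^\bullet$ is homogeneous.

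The spanning statement is a straightening algorithm, done by induction on a suitable order on indices of weight $w$: I would use depth first and then a lexicographic condition on the first position $i$ that violates the Thakur conditions. Suppose $\ww{u}$ is not in $\Ical^{\mathrm T}$. Then either some $u_i>q$ with $i<r$, or $u_r\ge q$. In the first case I write $u_i=q+(u_i-q)$ and use $\sA^\bullet(\ww{u}[:i-1];1;\cdot)$ with the $\boxplus$-term to express $[\ww{u}]$ through the other terms. In the second case I use $\ww{n}=\varnothing$ or split off the final $q$.

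\textbf{Main obstacle.} The hard step is showing that every other term of $\sA^\bullet$ is strictly smaller in the chosen order. The $\alpha$-terms begin with $1$ followed by $q-1$ and have larger depth, so the order has to reward increasing depth; I expect the delicate point to be the $\zeta$-case, where $D$-terms from the $q$-shuffle add extra pieces. My first attempt would be the order used in \cite{CCM23}: order by depth descending in a bounded range (depth is at most $w$), and then by the position of the first entry exceeding $q$, taken later. I would check termination case by case on each of the six terms.
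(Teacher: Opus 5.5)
Your architecture matches the proofs cited for this proposition. The vanishing is the binary-relation mechanism of \cite[Appendix~A]{CCM23}, namely $\sA^\bullet(\ww{s};m;\ww{n})=\beta(\mathscr{B}_{\ww{s}}^\bullet(\mathscr{BC}_q^{\bullet,m-1}(\mathscr{C}_{\ww{n}}^\bullet(R_1))))$. You do mislabel one thing: the $\boxplus$-terms, like the $D$-terms, come from equal-degree contributions when $\ww{n}$ and $\ww{s}$ are attached, and the genuine boundary term is $L_1^m\mathscr{L}_D^\bullet(\alpha_{1;(q-1)}^{\bullet,m}(\ww{n}))$. You also reduce the kernel correctly to $\Hfk_w=\mathscr{R}_w^\bullet+\Span_K\{[\ww{s}]:\ww{s}\in\Ical_w^{\mathrm T}\}$ plus \cref{thm:basis_for_infty}.

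\textbf{The gap is the straightening.} Take $m=1$ and $\ww{s}=\ww{u}[:i-1]$. The merge $[u_i-1,\ww{n}^-]$ in $[q-1]\ast^\bullet\ww{n}$ makes the term $-L_1[\ww{s}]\boxplus\alpha_{1;(q-1)}^\bullet(\ww{n})$ contain the same-depth index $(u_1,\ldots,u_{i-2},u_{i-1}+1,u_i-1,u_{i+1},\ldots,u_r)$. If $u_{i-1}=q$, this index already violates at position $i-1$, so it is larger, not smaller, in your order. For $\bullet=\zeta$ no order can rescue this, because your rewriting rules cycle. Take $q=3$. For $[3,3]$ your rule ($\ww{n}=\varnothing$) uses
\[
\sA^\zeta((3);1;\varnothing)=[3,3]-L_1\bigl([3,1,2]+[4,2]+[2,4]\bigr).
\]
Here $[2,4]$ is the $D_3$-term, since in characteristic $3$ one has $[2]\ast^\zeta[2]=[4]$. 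For $[2,4]$ your rule uses
\[
\sA^\zeta((2);1;(1))=[2,3,1]+[2,4]+[2,2,2]-L_1[2,\alpha_{1;(2)}^\zeta([1])]-L_1\bigl([3,2,1]+[3,1,2]+[3,3]\bigr),
\]
which brings $[3,3]$ back in depth $2$.

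\textbf{The missing idea} is exactly why $\sA^\bullet$ carries a block $\{q\}^m$. Write $\ww{u}=(\ww{s},\{q\}^k,u_i,\ldots,u_r)$ with $k\geq0$ maximal, $u_i>q$ the first offending entry, and $\ww{s}$ empty or ending in an entry $<q$. Use $m=k+1$ and $\ww{n}=(u_i-q,u_{i+1},\ldots,u_r)$. If instead all entries are $\leq q$ and $\ww{u}=(\ww{s},\{q\}^k)$ with $k\geq1$ maximal, use $m=k$ and $\ww{n}=\varnothing$.

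With this choice, the $[\ww{s}]\boxplus$-term only raises an entry $<q$ to one $\leq q$. The $D_{s_r}$-term vanishes: $\alpha_{1;(q-1)}^{\bullet,m}(\ww{n})$ begins with $1$, which forces $j=q-1=s_r$, and $\Delta_{q-1,1}^{[q-1]}=(-1)^q+1=0$ in $K$. For example, $\sA^\bullet(\varnothing;2;\varnothing)=[3,3]-L_1^2\alpha_{1;(2)}^{\bullet,2}([\varnothing])$ disposes of $[3,3]$ using only terms of depth $\geq3$.

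Every other term then satisfies one of the following:
\begin{itemize}
\item it has larger depth;
\item it has the same depth and a lexicographically larger longest initial segment with entries $\leq q$, a proper extension counting as larger;
\item it has the same depth and segment and a smaller first offending entry; this case comes from $[\ww{s},\{q\}^{m-1},D_q(\ww{n})]$.
\end{itemize}
This is Ngo Dac's initial-tuple induction.

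\textbf{A second, smaller gap.} Homogeneity of $\sA^\bullet$ only gives $\mathscr{R}^\bullet=\bigoplus_w\mathscr{R}_w^\bullet$. To pass from the weight-wise kernels to $\ker\mathscr{L}_\infty^\bullet=\mathscr{R}^\bullet$ you also need $\Zcal_\infty=\bigoplus_w\Zcal_{\infty,w}$. This is Chang's theorem \cite{Cha14} that $\infty$-adic multiple zeta values of different weights are $K$-linearly independent, and it does not follow from \cref{thm:basis_for_infty}.
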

\section{The Finite Comparison}\label{sec:comparison}
\subsection{Formalism of Evaluation Maps}\label{sec:formalism}
We recall the formalism of evaluation maps developed in
\cite[\S3.1]{TsuCoaction26} and state only the results needed below. Throughout this subsection, let $\bullet\in\{\zeta,\Li\}$, let $B$ be a
commutative unital $K$-algebra, and let
\[
\ev,\ev^\ant:\Hfk\longrightarrow B
\]
be $K$-linear maps. We consider the following conditions:
\begin{enumerate}[label=\textup{(C\arabic*)},ref=\textup{C\arabic*}]
\item\label{cond:unital}
$\ev(1)=\ev^\ant(1)=1$.

\item\label{cond:convolution}
For every non-empty index $\ww{s}\in\Ical$,
\[
m_B(\ev\otimes\ev^\ant)\Delta([\ww{s}])
=
0
=
m_B(\ev^\ant\otimes\ev)\Delta([\ww{s}]),
\]
where $m_B$ denotes the multiplication map of $B$.

\item\label{cond:vanishing}
If $\bullet=\zeta$, then
$\ev^\ant([j])=0$ whenever $q-1\mid j$, while if $\bullet=\Li$, then
$\ev^\ant([q-1])=0$.

\item\label{cond:ev-product}
For all $P,Q\in\Hfk$,
\[
\ev(P\ast^\bullet Q)=\ev(P)\ev(Q).
\]

\item\label{cond:ant-product}
For all $P,Q\in\Hfk$,
\[
\ev^\ant(P\ast^\bullet Q)=\ev^\ant(P)\ev^\ant(Q).
\]
\end{enumerate}

Define
\[
\Rcal
=
(\id\otimes\ev^\ant)\Delta:
\Hfk\longrightarrow\Hfk\otimes_K B,
\]
and equip $\Hfk\otimes_K B$ with the product
\[
(P\otimes x)\star^\bullet(Q\otimes y)
=
(P\ast^\bullet Q)\otimes xy
\]
for $P,Q\in\Hfk$ and $x,y\in B$.

We will use the following three lemmas.

\begin{lem}\label{lem:bullet-stuffle-transfer}
For $\bullet=\Li$, assume Conditions
\ref{cond:unital}, \ref{cond:convolution}, and
\ref{cond:ev-product}. For $\bullet=\zeta$, assume Conditions
\ref{cond:unital}--\ref{cond:ev-product}.
Then, for all $P,Q\in\Hfk$,
\[
\Rcal(P\ast^\bullet Q)
=
\Rcal(P)\star^\bullet\Rcal(Q).
\]
In particular, Condition \ref{cond:ant-product} holds.
\end{lem}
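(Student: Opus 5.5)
We argue by induction on $\dep(P)+\dep(Q)$. By bilinearity we may take $P=[\ww{s}]$ and $Q=[\ww{n}]$ to be indices. If either is $1$ the claim is trivial: $\Rcal(1)=1\otimes 1$ by \ref{cond:unital}, and $1\otimes1$ is the unit for $\star^\bullet$. For non-empty $\ww{s},\ww{n}$ we write
\[
\Rcal([\ww{s}])=\sum_{i=0}^{r}[\ww{s}[:i]]\otimes\ev^\ant([\ww{s}[i+1:]]),
\]
and use the recursive description of $\ast^\bullet$ in \eqref{eq:bullet-stuffle}. The aim is a first-letter recursion for $\Rcal$. The term $i=0$ is $1\otimes\ev^\ant([\ww{s}])$, and each term with $i\ge1$ begins with the letter $s_1$. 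This gives
\[
\Rcal([\ww{s}])=[s_1,\Rcal(\ww{s}^-)]+1\otimes\ev^\ant([\ww{s}]),
\]
where $[c,-]$ acts on the left tensor factor. Applying this to $\ww{s}\ast^\bullet\ww{n}$ and expanding with \eqref{eq:bullet-stuffle}, the left-hand side becomes $[s_1,\Rcal(\ww{s}^-\ast\ww{n})]+[n_1,\Rcal(\ww{s}\ast\ww{n}^-)]+[s_1+n_1,\Rcal(\ww{s}^-\ast\ww{n}^-)]$, plus the contribution $\Rcal(D^\bullet_{\ww{s}}(\ww{n}))$, plus a constant term $1\otimes\ev^\ant(\ww{s}\ast\ww{n})$. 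The induction hypothesis turns each $\Rcal(\cdot\ast\cdot)$ of smaller total depth into a $\star^\bullet$-product.

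On the right-hand side we expand $\Rcal(\ww{s})\star^\bullet\Rcal(\ww{n})$ with the same recursion for $\Rcal(\ww{s})$ and $\Rcal(\ww{n})$. The three cross terms of the form $[s_1,-]\star[n_1,-]$ are expanded by \eqref{eq:bullet-stuffle} applied in the left factor. The terms involving $1\otimes\ev^\ant(\cdot)$ produce mixed products $[s_1,\Rcal(\ww{s}^-)]\star(1\otimes\ev^\ant(\ww{n}))$ and the constant $1\otimes\ev^\ant(\ww{s})\ev^\ant(\ww{n})$. Matching terms beginning with $s_1$, with $n_1$, and with $s_1+n_1$ reduces to the induction hypothesis. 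What remains is an identity among constant terms, i.e.\ terms in $1\otimes B$:
\[
\ev^\ant(\ww{s}\ast^\bullet\ww{n})=\ev^\ant(\ww{s})\ev^\ant(\ww{n}).
\]
This is exactly \ref{cond:ant-product}, which we may not assume. We obtain it instead by applying $m_B(\ev\otimes\id)$ to the word-level identity. The operator $m_B(\ev\otimes\id)\circ\Rcal=m_B(\ev\otimes\ev^\ant)\Delta$ kills every non-empty word by \ref{cond:convolution}, and it is multiplicative on $\star^\bullet$-products by \ref{cond:ev-product}. So we run the induction in the strengthened form: first prove the identity modulo constant terms, i.e.\ after projecting away from $1\otimes B$; then apply $m_B(\ev\otimes\id)$ to both sides. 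The left side gives $0$ plus the constant $\ev^\ant(\ww{s}\ast\ww{n})$ adjusted by already-matched terms, and the right side gives $\ev^\ant(\ww{s})\ev^\ant(\ww{n})$ plus matching terms. This pins down the constant part. Equivalently, $\ev^\ant$ is the convolution inverse of the character $\ev$ (by \ref{cond:convolution}), and the convolution inverse of a character of a commutative quasi-shuffle-type bialgebra is again a character.

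The main obstacle is the case $\bullet=\zeta$, where the extra term $D_{\ww{s}}(\ww{n})$ appears. On the right-hand side $\star^\zeta$ generates corresponding $D$-terms from $[s_1,\cdot]\star[n_1,\cdot]$, whose inner part has the shape $[j]\ast^\zeta(\Rcal(\ww{s}^-)\star\Rcal(\ww{n}^-))$. On the left, $\Rcal$ applied to $[s_1+n_1-j,[j]\ast(\ww{s}^-\ast\ww{n}^-)]$ yields, after the induction hypothesis, $\Rcal([j])\star\Rcal(\ww{s}^-\ast\ww{n}^-)$ with $\Rcal([j])=[j]\otimes1+1\otimes\ev^\ant([j])$. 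The leftover terms have the form $\Delta^{[j]}_{s_1,n_1}\ev^\ant([j])\,[s_1+n_1-j,\ldots]$. These vanish because $\Delta^{[j]}_{s_1,n_1}\neq0$ only when $q-1\mid j$, and then $\ev^\ant([j])=0$ by \ref{cond:vanishing}. This is precisely why \ref{cond:vanishing} is needed only for $\zeta$; I expect the bookkeeping of this term, together with the deconcatenation cut landing inside the $D$-part, to be the delicate step. The final assertion follows by applying $m_B(\ev\otimes\id)$ once more, or equivalently by reading off constant terms.
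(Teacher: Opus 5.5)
The paper does not prove this lemma; it is quoted from \cite[\S3.1]{TsuCoaction26}, so there is no in-text argument to compare with. On its own merits, your induction on total depth is correct. To be airtight, the matching in the inductive step should be done structurally rather than ``by first letter'', since first letters can coincide (e.g.\ $s_1=n_1$, or $s_1+n_1-j\in\{s_1,n_1\}$).

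Concretely, the mixed term $\ev^\ant([\ww{n}])\,[s_1,\Rcal([\ww{s}^-])]$ must be absorbed into $[s_1,\Rcal([\ww{s}^-])\star^\bullet[n_1,\Rcal([\ww{n}^-])]]$ to give $[s_1,\Rcal([\ww{s}^-])\star^\bullet\Rcal([\ww{n}])]$, and symmetrically for $n_1$. Only after that does the induction hypothesis apply. The sole remaining discrepancy is then $\sum_j\Delta^{[j]}_{s_1,n_1}\ev^\ant([j])\,[s_1+n_1-j,\Rcal([\ww{s}^-])\star^\zeta\Rcal([\ww{n}^-])]$, which \ref{cond:vanishing} kills; for $\bullet=\Li$ this term does not arise at all.

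Your constant-term step is sound. The map $m_B(\ev\otimes\id)\circ\Rcal=m_B(\ev\otimes\ev^\ant)\Delta$ annihilates $\Rcal(\ww{s}\ast^\bullet\ww{n})$ by \ref{cond:convolution}, because $\ww{s}\ast^\bullet\ww{n}$ has no empty-word component. It annihilates $\Rcal(\ww{s})\star^\bullet\Rcal(\ww{n})$ by \ref{cond:ev-product} together with \ref{cond:convolution}. It sends $1\otimes c$ to $c$ by \ref{cond:unital}. Since the non-constant parts already agree, the constants agree as well.

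Two side remarks are wrong, although nothing depends on them.

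\emph{The ``equivalently'' slogan.} The claim that the convolution inverse of a character of a quasi-shuffle bialgebra is a character is valid only for $\bullet=\Li$. For $\ast^\zeta$, deconcatenation is not multiplicative with respect to the componentwise product on $\Hfk\otimes_K\Hfk$. For instance, with $q=3$ one has $[2]\ast^\zeta[2]=[4]$, while $\Delta([2])\ast^\zeta\Delta([2])$ contains the extra term $2[2]\otimes[2]$. Consequently \ref{cond:ant-product} genuinely fails without \ref{cond:vanishing}: for $\ev=\mathscr{L}_\infty^\zeta$ and $q=3$ it would force $\zeta_A(4)=-\zeta_A(4)$. So the slogan would prove too much.

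\emph{The final assertion.} Applying $m_B(\ev\otimes\id)$ to the finished identity only returns $0=0$ on non-empty words. Condition \ref{cond:ant-product} is instead the $1\otimes B$-component of the identity. That is exactly the equality of constant terms you already extracted inside the induction, supplemented by \ref{cond:unital} when $P$ or $Q$ equals $1$.
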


\begin{lem}\label{lem:degree-convolution}
For every $s\in\NN$ and $d\geq0$, let
$\mathscr{S}_d(s)\in B$, and let
$D\in\NN\cup\{\infty\}$. Assume that the sums below are well-defined in
$B$. Define $K$-linear maps $\ev,\ev^\ant:\Hfk\to B$ by
$\ev(1)=\ev^\ant(1)=1$ and, for every non-empty index
$\ww{s}=(s_1,\ldots,s_r)$, by
\begin{align*}
\ev([\ww{s}])
&=
\sum_{D>d_1>\cdots>d_r\geq0}
\prod_{k=1}^r\mathscr{S}_{d_k}(s_k),\\
\ev^\ant([\ww{s}])
&=
(-1)^r
\sum_{0\leq d_1\leq\cdots\leq d_r<D}
\prod_{k=1}^r\mathscr{S}_{d_k}(s_k).
\end{align*}
Then Conditions \ref{cond:unital} and \ref{cond:convolution} hold.
\end{lem}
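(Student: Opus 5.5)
We need to prove Lemma (degree-convolution): conditions C1 and C2 hold for the truncated sums. C1 holds by definition. For C2, we compute directly.

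Fix a non-empty index $\ww{s}=(s_1,\ldots,s_r)$. By the definition of $\Delta$,
\[
m_B(\ev\otimes\ev^\ant)\Delta([\ww{s}])
=\sum_{i=0}^{r}\ev([\ww{s}[:i]])\,\ev^\ant([\ww{s}[i+1:]]).
\]
The plan is to expand each summand as a sum over degree tuples $(d_1,\ldots,d_r)$ with every entry less than $D$. The $i$-th summand is
\[
(-1)^{r-i}\sum \prod_{k=1}^r\mathscr{S}_{d_k}(s_k),
\]
where the sum runs over tuples satisfying
\[
D>d_1>\cdots>d_i\geq0,\qquad 0\leq d_{i+1}\leq\cdots\leq d_r<D .
\]
The two blocks of variables are independent. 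For $i=0$ only the weakly increasing condition is present, and for $i=r$ only the strictly decreasing one.

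Let $T_i$ be the set of tuples with $D>d_1>\cdots>d_i\geq0$ and $0\le d_{i+1}\le\cdots\le d_r<D$. We sort each $T_i$ according to how $d_i$ and $d_{i+1}$ compare. For $1\le i\le r-1$, write $T_i=T_i^{>}\sqcup T_i^{\le}$, where $T_i^{>}$ consists of the tuples with $d_i>d_{i+1}$ and $T_i^{\le}$ of those with $d_i\le d_{i+1}$. Then:
\begin{itemize}
\item[] $T_i^{>}$ is exactly the set of tuples with $d_1>\cdots>d_{i+1}$ and $d_{i+1}\le\cdots\le d_r$;
\item[] $T_{i+1}^{\le}$ is the set of tuples with $d_1>\cdots>d_{i+1}$ and $d_{i+1}\le d_{i+2}\le\cdots\le d_r$.
\end{itemize}
These are the same set, so $T_i^{>}=T_{i+1}^{\le}$.

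The boundary cases are handled with the conventions that all of $T_0$ counts as "$\le$" and all of $T_r$ counts as "$>$". Then $T_0=T_1^{\le}$, since $T_1^{\le}$ has no constraint beyond $d_1\le\cdots\le d_r$. Likewise $T_r=T_{r-1}^{>}$, since requiring $d_{r-1}>d_r$ on top of $d_1>\cdots>d_{r-1}$ is exactly strict decrease.

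The summand product $\prod_k\mathscr{S}_{d_k}(s_k)$ is the same in every case, and consecutive $i$ carry opposite signs. So the total telescopes: the pieces $T_i^{>}$ and $T_{i+1}^{\le}$ cancel pairwise, and the total is $0$. (If $r=1$, the two terms are $\sum_{d}\mathscr{S}_d(s_1)$ and $-\sum_d\mathscr{S}_d(s_1)$ over the same range $0\le d<D$, so they cancel.)

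The second identity, for $(\ev^\ant\otimes\ev)$, follows the same way with the roles reversed. Here the $i$-th term has $d_1\le\cdots\le d_i$ and $d_{i+1}>\cdots>d_r$. We split according to whether $d_i\le d_{i+1}$ or $d_i> d_{i+1}$, which here means whether $d_{i+1}$ joins the increasing block or the decreasing block, and pair these pieces across adjacent $i$ as before.

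The only delicate point is bookkeeping: checking that the piece assignments are exact bijections of index sets at the boundary cases. When $D=\infty$, the rearrangements need the well-definedness hypothesis, meaning absolute or ultrametric convergence in $B$, which is assumed. I expect the boundary bookkeeping to be the main obstacle, and it is routine.
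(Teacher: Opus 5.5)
Your proof is correct and complete. For $r\ge2$, the $2r$ pieces fall into the $r$ pairs
$(T_0,T_1^{\le}),(T_1^{>},T_2^{\le}),\dots,(T_{r-1}^{>},T_r)$. Within each pair the index sets are identical and the signs are opposite. All tuples have entries in $\{d\in\ZZ:0\le d<D\}$, and the summand depends only on the tuple, so everything cancels. For $m_B(\ev^\ant\otimes\ev)\Delta$ the mirror matching works:
\begin{itemize}
\item the $\le$-part at level $i$ equals the $>$-part at level $i+1$;
\item the $i=0$ term equals the $>$-part at level $1$;
\item the $i=r$ term equals the $\le$-part at level $r-1$.
\end{itemize}

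The paper gives no proof of this lemma. It is imported from the author's earlier work on evaluation maps, as announced at the start of \cref{sec:formalism}, so there is no in-text argument to compare with. A few minor comments follow.

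\begin{itemize}
\item Your sentence on the boundary ``conventions'' reads backwards relative to your own pairing $T_i^{>}\leftrightarrow T_{i+1}^{\le}$. In that scheme $T_0$ plays the role of a $>$-piece and $T_r$ that of a $\le$-piece. The identifications you actually use, $T_0=T_1^{\le}$ and $T_r=T_{r-1}^{>}$, are nonetheless right.
\item For $D=\infty$ it is cleaner to prove the identity for every finite $D$, where all sums are finite, and then let $D\to\infty$. This uses only two facts: the defining sums are limits of their $D$-truncations, and multiplication in $B$ (e.g.\ $K_\infty$) is continuous. It avoids any rearrangement of products of infinite series. The finite-$D$ statement is also exactly what is needed when the paper applies the lemma to $\Acal_K$, componentwise with $D=\deg v$ varying with $v$.
\item The same cancellation has a one-line conceptual form. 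Put $Y_d=\sum_{s}\mathscr{S}_d(s)x_s$ in noncommuting variables $x_s$. Then the generating series of $\ev$ and $\ev^\ant$ are the ordered products $(1+Y_{D-1})\cdots(1+Y_0)$ and $(1+Y_0)^{-1}\cdots(1+Y_{D-1})^{-1}$. Since deconcatenation is dual to concatenation, Conditions \ref{cond:unital} and \ref{cond:convolution} say precisely that these two series are mutually inverse. Your telescoping is the coefficientwise verification of this.
\end{itemize}
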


\begin{lem}\label{lem:image-comparison}
Assume Conditions \ref{cond:unital} and \ref{cond:convolution}.
If Condition \ref{cond:ev-product} holds, then
\[
\ev^\ant(\Hfk)
\subseteq
\ev(\Hfk).
\]
If Condition \ref{cond:ant-product} holds, then
\[
\ev(\Hfk)
\subseteq
\ev^\ant(\Hfk).
\]
In particular, if both Conditions \ref{cond:ev-product} and
\ref{cond:ant-product} hold, then
\[
\ev(\Hfk)
=
\ev^\ant(\Hfk).
\]
\end{lem}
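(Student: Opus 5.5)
The plan is a direct induction on depth. It rests on two observations. Condition \ref{cond:convolution} lets us solve for $\ev^\ant([\ww{s}])$ (respectively $\ev([\ww{s}])$) in terms of values of strictly smaller depth. The product conditions \ref{cond:ev-product} and \ref{cond:ant-product} make the images $\ev(\Hfk)$ and $\ev^\ant(\Hfk)$ into $K$-subalgebras of $B$.

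First, under \ref{cond:unital} and \ref{cond:ev-product}, the subspace $\ev(\Hfk)$ is a $K$-subalgebra of $B$. It is a $K$-subspace because $\ev$ is $K$-linear, and it contains $K=K\cdot\ev(1)$. It is closed under multiplication since $\ev(P)\ev(Q)=\ev(P\ast^\bullet Q)$. Likewise, under \ref{cond:unital} and \ref{cond:ant-product}, $\ev^\ant(\Hfk)$ is a $K$-subalgebra.

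Next, fix a non-empty index $\ww{s}$ with $\dep(\ww{s})=r$. Unwinding the first identity in \ref{cond:convolution} gives
\[
\sum_{i=0}^{r}\ev([\ww{s}[:i]])\,\ev^\ant([\ww{s}[i+1:]])=0 .
\]
The $i=0$ term equals $\ev^\ant([\ww{s}])$ by \ref{cond:unital}. Hence
\[
\ev^\ant([\ww{s}])=-\sum_{i=1}^{r}\ev([\ww{s}[:i]])\,\ev^\ant([\ww{s}[i+1:]]),
\]
where each $\ww{s}[i+1:]$ with $i\geq1$ has depth $<r$. We argue by induction on $r$, with base case $\ev^\ant(1)=1\in\ev(\Hfk)$. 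By the inductive hypothesis every factor $\ev^\ant([\ww{s}[i+1:]])$ lies in $\ev(\Hfk)$, and the factors $\ev([\ww{s}[:i]])$ lie there trivially. Since $\ev(\Hfk)$ is a subalgebra, the right-hand side lies in $\ev(\Hfk)$. Linearity then gives $\ev^\ant(\Hfk)\subseteq\ev(\Hfk)$.

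The reverse inclusion is symmetric and uses the second identity in \ref{cond:convolution},
\[
\sum_{i=0}^{r}\ev^\ant([\ww{s}[:i]])\,\ev([\ww{s}[i+1:]])=0 .
\]
Isolating the $i=0$ term $\ev([\ww{s}])$ gives
\[
\ev([\ww{s}])=-\sum_{i=1}^{r}\ev^\ant([\ww{s}[:i]])\,\ev([\ww{s}[i+1:]]).
\]
By induction on depth, each $\ev([\ww{s}[i+1:]])$ lies in $\ev^\ant(\Hfk)$, and closure of $\ev^\ant(\Hfk)$ under products by \ref{cond:ant-product} finishes the argument. The final assertion follows by combining the two inclusions.

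The only point needing care is choosing which of the two convolution identities to use in each direction. The identity must be the one in which the factors of smaller depth carry the map whose image is being shown to be contained, while the other factors already lie in the target subalgebra. There is no genuine obstacle beyond this bookkeeping.
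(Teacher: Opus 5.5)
Your proof is correct. The paper does not prove this lemma itself; it imports it from \cite[\S3.1]{TsuCoaction26}. Your argument is the natural recursion: isolate the term of \ref{cond:convolution} coming from $\ev(1)=\ev^\ant(1)=1$, which expresses $\ev^\ant([\ww{s}])$ (resp.\ $\ev([\ww{s}])$) through products whose relevant factors have strictly smaller depth. Then use \ref{cond:ev-product} (resp.\ \ref{cond:ant-product}) to see that $\ev(\Hfk)$ (resp.\ $\ev^\ant(\Hfk)$) is closed under multiplication, and induct on depth.

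One small aside on your last paragraph: either half of \ref{cond:convolution} alone already gives both inclusions. For example, in $\sum_{i=0}^{r}\ev([\ww{s}[:i]])\,\ev^\ant([\ww{s}[i+1:]])=0$, isolating the $i=0$ term handles $\ev^\ant(\Hfk)\subseteq\ev(\Hfk)$, and isolating the $i=r$ term handles $\ev(\Hfk)\subseteq\ev^\ant(\Hfk)$.
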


We will also use the following consequence of 
\cite[\S4.1]{TsuCoaction26} (see also \cite[Definition~1.3.1 and Proposition~2.2.4]{Mis26}).

\begin{lem}\label{lem:infty-star-inverse-space}
Let
$\bullet\in\{\zeta,\Li\}$. Then
\[
\mathscr{L}_\infty^{\bullet,\ant}(\Hfk)
\subseteq
\mathscr{L}_\infty^\bullet(\Hfk)
=
\Zcal_{\infty}.
\]
Moreover,
\[
\left\{
\mathscr{L}_\infty^{\bullet,\ant}(P)
\bmod \zeta_A(q-1)\Zcal_{\infty}
:
P\in\Hfk
\right\}
=
{\Zcal_{\infty}}
/{\zeta_A(q-1)\Zcal_{\infty}}.
\]
\end{lem}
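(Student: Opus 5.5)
The plan is to apply the formalism of \S\ref{sec:formalism} twice. The first application, in $B=K_\infty$, gives the inclusion. The second, in the quotient ring $B'=\Zcal_\infty/\zeta_A(q-1)\Zcal_\infty$, gives the surjectivity modulo $\zeta_A(q-1)$.

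\emph{Step 1 (the inclusion).} Take $B=K_\infty$, $\ev=\mathscr{L}_\infty^\bullet$ and $\ev^\ant=\mathscr{L}_\infty^{\bullet,\ant}$. These have exactly the shape in \cref{lem:degree-convolution} with $D=\infty$ and $\mathscr{S}_d=\mathscr{S}_d^\bullet$. The sums converge $\infty$-adically, since $|\mathscr{S}^\bullet_d(s)|\to0$ as $d\to\infty$. Hence Conditions \ref{cond:unital} and \ref{cond:convolution} hold. Condition \ref{cond:ev-product} is \cref{thm:strict-bullet-character}. The first half of \cref{lem:image-comparison} then gives $\mathscr{L}_\infty^{\bullet,\ant}(\Hfk)\subseteq\mathscr{L}_\infty^\bullet(\Hfk)$. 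For the equality $\mathscr{L}_\infty^\bullet(\Hfk)=\Zcal_\infty$: the case $\bullet=\zeta$ holds by definition of $\Zcal_\infty$, and the case $\bullet=\Li$ follows from \cite[Corollary~3.7]{CCM23}.

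\emph{Step 2 (passing to the quotient).} By Step 1, both maps take values in $\Zcal_\infty$. Let $\pi\colon\Zcal_\infty\to B'$ be the projection; it is a $K$-algebra homomorphism, and $B'$ is a commutative unital $K$-algebra. Set $\ev'=\pi\circ\mathscr{L}_\infty^\bullet$ and $\ev'^{\ant}=\pi\circ\mathscr{L}_\infty^{\bullet,\ant}$. Conditions \ref{cond:unital}, \ref{cond:convolution} and \ref{cond:ev-product} survive composition with a ring homomorphism. It remains to verify Condition \ref{cond:vanishing}, which is the only genuinely new input. In depth one, $\mathscr{L}_\infty^{\bullet,\ant}([j])=-\sum_{d\ge0}\mathscr{S}^\bullet_d(j)$, which equals $-\zeta_A(j)$ for $\bullet=\zeta$ and $-\Li_j(1)$ for $\bullet=\Li$.
\begin{itemize}
\item For $\bullet=\Li$, Carlitz's formula $\mathscr{S}_d^\zeta(q-1)=\mathscr{S}_d^\Li(q-1)$ gives $\Li_{q-1}(1)=\zeta_A(q-1)$, which vanishes in $B'$.
\item For $\bullet=\zeta$, we need $\zeta_A(j)\in\zeta_A(q-1)\Zcal_\infty$ whenever $q-1\mid j$. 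I expect this to be the main (though classical) point. By Carlitz, for $q-1\mid j$ one has $\zeta_A(j)=\frac{BC_j}{\Gamma_j}\tilde\pi^{\,j}$ with Bernoulli--Carlitz numbers in $K$. In particular $\zeta_A(q-1)=c\,\tilde\pi^{\,q-1}$ with $c\in K^\times$. Hence $\zeta_A(j)\in K\cdot\zeta_A(q-1)^{j/(q-1)}$, which lies in $\zeta_A(q-1)\Zcal_\infty$ because $j/(q-1)\ge1$.
\end{itemize}

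\emph{Step 3 (conclusion).} With Conditions \ref{cond:unital}--\ref{cond:ev-product} verified for $(\ev',\ev'^{\ant})$ in $B'$, \cref{lem:bullet-stuffle-transfer} yields Condition \ref{cond:ant-product}. \cref{lem:image-comparison} then gives $\ev'(\Hfk)=\ev'^{\ant}(\Hfk)$. Since $\mathscr{L}_\infty^\bullet(\Hfk)=\Zcal_\infty$ by Step 1, we have $\ev'(\Hfk)=B'$. Therefore every class in $\Zcal_\infty/\zeta_A(q-1)\Zcal_\infty$ is of the form $\mathscr{L}_\infty^{\bullet,\ant}(P)\bmod\zeta_A(q-1)\Zcal_\infty$, which is the claimed equality.
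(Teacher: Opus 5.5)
Your proposal is correct and follows essentially the argument the paper intends. The paper states this lemma without proof as a consequence of \cite[\S4.1]{TsuCoaction26}, and its proof of \cref{thm:finite-star-inverse-comparison} runs your template: \cref{lem:degree-convolution}, then \cref{lem:bullet-stuffle-transfer} and \cref{lem:image-comparison} in a quotient ring, with Condition~\ref{cond:vanishing} for the $\infty$-adic component being exactly the Carlitz fact $\zeta_A(j)\in\zeta_A(q-1)\Zcal_\infty$ for $q-1\mid j$ that you supply.

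One small correction: $\mathscr{L}_\infty^\zeta(\Hfk)=\Zcal_\infty$ is not literally true by definition, since $\Zcal_\infty$ is the generated algebra. It does follow immediately, because the $K$-span is closed under products by \cref{thm:strict-bullet-character}.
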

\subsection{The Finite Comparison Map}\label{sec:finite-comparison}
We now turn to the finite setting. For
$\bullet\in\{\zeta,\Li\}$, let $\Zcal_{\Acal_K}^\bullet
\subseteq
\Acal_K$
be the $K$-algebra generated by
$\mathscr{L}_{\Acal_K}^\bullet([\ww{s}])$ with
$\ww{s}\in\Ical$. For $w\geq0$, put
\[
\Zcal_{\Acal_K,w}^\bullet
=
\Span_K
\left\{
\mathscr{L}_{\Acal_K}^\bullet([\ww{s}])
:
\ww{s}\in\Ical,\ \wt(\ww{s})=w
\right\},
\]
and set
$\Zcal_{\Acal_K,w}^\bullet=0$ for $w<0$.
We also put
\[
\Zcal_{\Acal_K,w}^{\bullet,\mathrm{T}}
=
\Span_K
\{
\mathscr{L}_{\Acal_K}^\bullet([\ww{s}])
:
\ww{s}\in\Ical_w^{\mathrm{T}}
\}.
\]
By Theorem \ref{thm:finite-character}, we have
\[
\Zcal_{\Acal_K}^\bullet
=
\sum_{w\geq0}\Zcal_{\Acal_K,w}^\bullet,
\qquad
\Zcal_{\Acal_K,w}^\bullet
\Zcal_{\Acal_K,w'}^\bullet
\subseteq
\Zcal_{\Acal_K,w+w'}^\bullet.
\]

For $D\in\ZZ$, we define $K$-linear maps
\[
\mathscr{L}_D^\bullet,\,
\mathscr{L}_{<D}^\bullet:
\Hfk\longrightarrow K
\]
as follows. We set
\[
\mathscr{L}_D^\bullet([\varnothing])
=
\begin{cases}
1,& D=-1,\\
0,& D\neq -1,
\end{cases}
\qquad
\mathscr{L}_{<D}^\bullet([\varnothing])
=
\begin{cases}
1,& D\geq0,\\
0,& D<0.
\end{cases}
\] For a non-empty index
$\ww{s}=(s_1,\ldots,s_r)\in\Ical$, put
\[
\mathscr{L}_D^\bullet([\ww{s}])
=
\sum_{D=d_1>d_2>\cdots>d_r\geq0}
\prod_{i=1}^r
\mathscr{S}_{d_i}^\bullet(s_i)
\]
and
\[
\mathscr{L}_{<D}^\bullet([\ww{s}])
=
\sum_{D>d_1>\cdots>d_r\geq0}
\prod_{i=1}^r
\mathscr{S}_{d_i}^\bullet(s_i).
\]
Thus, for every non-empty
$\ww{s}=(s_1,\ww{s}^-)\in\Ical$,
\[
\mathscr{L}_D^\bullet([\ww{s}])
=
\mathscr{S}_D^\bullet(s_1)
\mathscr{L}_{<D}^\bullet([\ww{s}^-])\qquad(D\geq 0),
\]
and
\[
\mathscr{L}_{<D}^\bullet([\ww{s}])
=
\sum_{0\leq d<D}
\mathscr{L}_d^\bullet([\ww{s}])\qquad (D\in\ZZ).
\]
In particular, for every $P\in\Hfk$, we have
\[
\mathscr{L}_{\Acal_K}^\bullet(P)
=
\left(\mathscr{L}_{<\deg v}^\bullet(P)\bmod v\right)_v\in\Acal_K.
\]

We next recall the notion of binary relations and the operators
$\mathscr{B}$ and $\mathscr{C}$, introduced by Todd
\cite[\S3]{Tod18}, as well as the $\mathscr{BC}$ operator introduced by
Ngo Dac \cite{ND21}. We use the
simultaneous $\zeta$- and $\Li$-formulation of these constructions given in
\cite[Appendix~A.2--A.3]{CCM23} as follows:

Let $\bullet\in\{\zeta,\Li\}$ and \[\Hfk^+=\Span_K\{[\ww{s}]:\varnothing\neq\ww{s}\in\Ical\}.\] A pair $(P,Q)\in\Hfk^+\times\Hfk^+$ is called a
\emph{binary relation} for the $\bullet$-realization if
\[
\mathscr{L}_d^\bullet(P)
+
\mathscr{L}_{d+1}^\bullet(Q)
=
0
\qquad(d\in\ZZ).
\]
The fundamental binary relation is
\[
R_1
=
\left([q],-L_1[1,q-1]\right),
\]
corresponding to
\[
\mathscr{L}_d^\bullet([q])
-
L_1\mathscr{L}_{d+1}^\bullet([1,q-1])
=
0
\qquad(d\in\ZZ)
\]
for both $\bullet=\zeta$ and $\bullet=\Li$ (see \cite[Theorem~5]{Tha09} and
\cite[Appendix~A.2]{CCM23}).

We also recall the operators on binary relations following
\cite[Appendix~A.3]{CCM23}. Let
$\ww{s}=(s_1,\ldots,s_r)\in\Ical$ be non-empty, and write
\[
P=\sum_{\ww{n}\neq\varnothing}a_{\ww{n}}[\ww{n}],
\qquad
Q=\sum_{\ww{n}\neq\varnothing}b_{\ww{n}}[\ww{n}].
\]
Put
$D_{c,Q}^\bullet
=
\sum_{\ww{n}\neq\varnothing}
b_{\ww{n}}D_c^\bullet(\ww{n})$.
Then we define
\[
\mathscr{B}_{\ww{s}}^\bullet(P,Q)
=
\left(
[\ww{s},P]
+
[\ww{s},Q]
+
[\ww{s}]\boxplus Q
+
[\ww{s}^+,D_{s_r,Q}^\bullet],
\,0
\right),
\]
and
\[
\begin{aligned}
\mathscr{C}_{\ww{s}}^\bullet(P,Q)
=
\sum_{\substack{
\ww{n}=(n_1,\ww{n}^-)\in\Ical\\
\ww{n}\neq\varnothing}}
\bigl(
&
a_{\ww{n}}
[n_1+s_1,\ww{n}^-\ast^\bullet\ww{s}^-]
+
a_{\ww{n}}
[n_1,\ww{n}^-\ast^\bullet\ww{s}]+
a_{\ww{n}}D_{\ww{n}}^\bullet(\ww{s}),
\,
b_{\ww{n}}
[n_1,\ww{n}^-\ast^\bullet\ww{s}]
\bigr).
\end{aligned}
\]
For $m\geq0$, we further put
\[
\mathscr{BC}_q^{\bullet,m}(P,Q)
=
\left(
[\{q\}^m,P],
\,
L_1^m
\alpha_{1;(q-1)}^{\bullet,m}(Q)
\right).
\]
We set
$\mathscr{B}_\varnothing^\bullet
=
\mathscr{C}_\varnothing^\bullet
=
\id$.
Then these operators preserve binary relations by
\cite[Proposition~A.4]{CCM23}. Finally, let
\[
\beta:\Hfk^+\times\Hfk^+\longrightarrow\Hfk^+,
\qquad
\beta(P,Q)=P+Q.
\]
Then one notices that, for every
$\ww{s},\ww{n}\in\Ical$ and $m\geq1$,
\[
\sA^\bullet(\ww{s};m;\ww{n})
=
\beta\left(
\mathscr{B}_{\ww{s}}^\bullet
\left(
\mathscr{BC}_q^{\bullet,m-1}
\left(
\mathscr{C}_{\ww{n}}^\bullet(R_1)
\right)
\right)
\right).
\]

\begin{lem}\label{lem:finite-CCM-relations}
Let $\bullet\in\{\zeta,\Li\}$,
$\ww{s},\ww{n}\in\Ical$, and $m\geq1$. Then
\[
\mathscr{L}_{\Acal_K}^\bullet
\left(
\sA^\bullet(\ww{s};m;\ww{n})
\right)
=
\begin{cases}
0,
&\ww{s}\neq\varnothing,\\[2mm]
\displaystyle
\gfk_{\Acal_K}
\mathscr{L}_{\Acal_K}^\bullet
\left(
L_1^{m-1}
\alpha_{1;(q-1)}^{\bullet,m-1}(\ww{n})
\right),
&\ww{s}=\varnothing.
\end{cases}
\]
Consequently, if
$P\in\ker(\mathscr{L}_\infty^\bullet)$ is homogeneous of weight $w$, then
\[
\mathscr{L}_{\Acal_K}^\bullet(P)
\in
\gfk_{\Acal_K}
\Zcal_{\Acal_K,w-q}^\bullet.
\]
\end{lem}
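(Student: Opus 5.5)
We work at the level of the truncated maps $\mathscr{L}_{<D}^\bullet$ rather than in $\Acal_K$ directly, and only reduce modulo $v$ at the very end. The central notion is that of a binary relation $(P,Q)$, meaning $\mathscr{L}_d^\bullet(P)+\mathscr{L}_{d+1}^\bullet(Q)=0$ for all $d\in\ZZ$. For such a pair, summing over $0\le d<D$ telescopes to
\[
\mathscr{L}_{<D}^\bullet(P+Q)=\mathscr{L}_{<D}^\bullet(P)+\mathscr{L}_{<D+1}^\bullet(Q)-\mathscr{L}_D^\bullet(Q)=\mathscr{L}_{<D}^\bullet(Q)-\mathscr{L}_{<D+1}^\bullet(Q)+\ldots,
\]
which, cleaned up, reads $\mathscr{L}_{<D}^\bullet(\beta(P,Q))=-\mathscr{L}_D^\bullet(Q)+\mathscr{L}_0^\bullet(Q)$. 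Here the boundary term at $d=-1$ is $\mathscr{L}_{-1}^\bullet(P)+\mathscr{L}_0^\bullet(Q)=0$, and $\mathscr{L}_{-1}^\bullet(P)=0$ since $P\in\Hfk^+$. Hence $\mathscr{L}_0^\bullet(Q)=0$ and
\[
\mathscr{L}_{<D}^\bullet(\beta(P,Q))=-\mathscr{L}_D^\bullet(Q)=\mathscr{L}_{D-1}^\bullet(P).
\]
We apply this to
\[
(P,Q)=\mathscr{B}_{\ww{s}}^\bullet\bigl(\mathscr{BC}_q^{\bullet,m-1}(\mathscr{C}_{\ww{n}}^\bullet(R_1))\bigr),
\]
which is a binary relation by \cite[Proposition~A.4]{CCM23}, with $D=\deg v$.

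\textbf{Case $\ww{s}\neq\varnothing$.} The second component of $\mathscr{B}_{\ww{s}}^\bullet$ is $0$. So the identity gives $\mathscr{L}_{<D}^\bullet(\sA^\bullet)=-\mathscr{L}_D^\bullet(0)=0$ for every $D$, and the finite value vanishes exactly.

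\textbf{Case $\ww{s}=\varnothing$.} Now $(P,Q)=\mathscr{BC}_q^{\bullet,m-1}(\mathscr{C}_{\ww{n}}^\bullet(R_1))$, and the identity gives $\mathscr{L}_{<D}^\bullet(\sA^\bullet(\varnothing;m;\ww{n}))=\mathscr{L}_{D-1}^\bullet(P)$ with
\[
P=[\{q\}^{m-1},P'],\qquad P'=[q,\ww{n}]+[q+n_1,\ldots]+D_q(\ww{n})\ \text{(first component of }\mathscr{C}_{\ww{n}}^\bullet(R_1)).
\]
The task is to show that $\mathscr{L}_{D-1}^\bullet(P)\bmod v$ equals the $v$-component of $\gfk_{\Acal_K}\mathscr{L}_{\Acal_K}^\bullet(L_1^{m-1}\alpha^{\bullet,m-1}_{1;(q-1)}(\ww{n}))$.

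The key input is that $\mathscr{S}_{d}^\bullet(s)$ for $\deg v>d$ reduces modulo $v$ nicely. Specifically, $\mathscr{L}_{D-1}^\bullet([c,R])=\mathscr{S}_{D-1}^\bullet(c)\,\mathscr{L}_{<D-1}^\bullet(R)$. I would rather use the binary relation a second time, in its "inner'' form. By the defining recursion of the $\mathscr{C}$ and $\mathscr{BC}$ operators, $\mathscr{L}_{D-1}^\bullet(P)=-\mathscr{L}_D^\bullet(Q)$ with
\[
Q=L_1^{m}\alpha^{\bullet,m}_{1;(q-1)}(\ww{n})=L_1\bigl[1,(q-1)\ast^\bullet L_1^{m-1}\alpha^{\bullet,m-1}(\ww{n})\bigr].
\]
Thus
\[
-\mathscr{L}_D^\bullet(Q)=-L_1\,\mathscr{S}_D^\bullet(1)\,\mathscr{L}_{<D}^\bullet\bigl([q-1]\ast^\bullet X\bigr),\qquad X=L_1^{m-1}\alpha^{\bullet,m-1}(\ww{n}).
\]
Since $\mathscr{L}_{<D}^\bullet$ is multiplicative for $\ast^\bullet$ modulo $v$ (Theorem~\ref{thm:finite-character}, applied componentwise; I expect the truncated sums to be multiplicative exactly, as in the proofs in \cite{CM17,Shi18}), this equals
\[
-L_1\,\mathscr{S}_D^\bullet(1)\,\mathscr{L}_{<D}^\bullet([q-1])\,\mathscr{L}_{<D}^\bullet(X).
\]
Then I compare with $\gfk_{\Acal_K}$ at $v$, computed by the same telescoping applied to $R_1$ itself. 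Since $\beta(R_1)=[q]-L_1[1,q-1]$, we get
\[
\mathscr{L}_{<D}^\bullet(\beta(R_1))=-\mathscr{L}_D^\bullet(-L_1[1,q-1])=L_1\,\mathscr{S}_D^\bullet(1)\,\mathscr{L}_{<D}^\bullet([q-1]).
\]
Up to sign bookkeeping (which I expect to match once the sign conventions of $\beta$ are tracked), the two expressions agree factor by factor. This yields $\gfk_{\Acal_K}\,\mathscr{L}_{\Acal_K}^\bullet(X)$ at every $v$.

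The main obstacle is justifying the identity $\mathscr{L}_{D-1}^\bullet(P)=-\mathscr{L}_D^\bullet(Q)$ for the pair after $\mathscr{BC}_q^{\bullet,m-1}$, together with the exact multiplicativity of the truncated $\mathscr{L}_{<D}$. The first is just the binary relation at $d=D-1$; the second I would check from the recursive definition of $\ast^\bullet$, or cite from \cite{Shi18}.

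\textbf{Consequence.} By Proposition~\ref{prop:Mishiba-kernel}, any homogeneous $P\in\ker\mathscr{L}_\infty^\bullet$ of weight $w$ is a $K$-combination of the elements $\sA^\bullet(\ww{s};m;\ww{n})$ of weight $w$. By the above, the finite image of each lies in $\gfk_{\Acal_K}\mathscr{L}^\bullet_{\Acal_K}(\Hfk_{w-q})=\gfk_{\Acal_K}\Zcal^\bullet_{\Acal_K,w-q}$. This uses that $\alpha_{1;(q-1)}$ raises weight by $q$, so that $X$ has weight $w-mq+(m-1)q=w-q$.
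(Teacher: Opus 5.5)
Your route is the paper's: realize $\sA^\bullet(\ww{s};m;\ww{n})$ as $\beta$ of the binary relation $\mathscr{B}_{\ww{s}}^\bullet(\mathscr{BC}_q^{\bullet,m-1}(\mathscr{C}_{\ww{n}}^\bullet(R_1)))$, telescope to get $\mathscr{L}_{<D}^\bullet(\sA^\bullet)=-\mathscr{L}_D^\bullet(Q)$, factor using multiplicativity, and recognize the summed fundamental relation. Your use of the $d=-1$ instance to get $\mathscr{L}_0^\bullet(Q)=0$ replaces the paper's remark that $\alpha_{1;(q-1)}^{\bullet,m}(\ww{n})$ has depth $\geq2$, and the detour through $\mathscr{L}_{D-1}^\bullet(P)$ is unnecessary. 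Two steps need repair before the argument proves the stated equality.

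First, the sign discrepancy you deferred does not come from the conventions for $\beta$; it is a slip in $Q$. The second component of $\mathscr{C}_{\ww{n}}^\bullet(R_1)$ is $-L_1\alpha_{1;(q-1)}^\bullet(\ww{n})$, inherited from the entry $-L_1[1,q-1]$ of $R_1$. Hence $Q=-L_1^m\alpha_{1;(q-1)}^{\bullet,m}(\ww{n})$, which is also forced by $P+Q=\sA^\bullet(\varnothing;m;\ww{n})$. Then $-\mathscr{L}_D^\bullet(Q)=+L_1\mathscr{S}_D^\bullet(1)\mathscr{L}_{<D}^\bullet([q-1])\mathscr{L}_{<D}^\bullet(X)$, which matches your expression for $\mathscr{L}_{<D}^\bullet(\beta(R_1))$ exactly. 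As written, you would obtain $-\gfk_{\Acal_K}$.

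Second, multiplicativity only modulo $v$ (e.g.\ read off from \cref{thm:finite-character}) is useless here. For $D=\deg v$ the factor $\mathscr{S}_D^\bullet(1)=1/L_D$ has a pole at $v$, since $L_D$ contains the factor $\theta-\theta^{q^D}$, which $v$ divides. So a congruence modulo $v$ for $\mathscr{L}_{<D}^\bullet([q-1]\ast^\bullet X)$ gives nothing modulo $v$ after multiplying by $L_1\mathscr{S}_D^\bullet(1)$. You need the exact identity $\mathscr{L}_{<D}^\bullet(Y\ast^\bullet Z)=\mathscr{L}_{<D}^\bullet(Y)\,\mathscr{L}_{<D}^\bullet(Z)$ in $K$ for every $D$. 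This holds, and it is what the paper invokes via \cite[Proposition~2.7]{CCM23}. You may reduce modulo $v$ only after rewriting $L_1\mathscr{S}_D^\bullet(1)\mathscr{L}_{<D}^\bullet([q-1])$ as the $v$-integral element $\mathscr{L}_{<D}^\bullet([q])-L_1\mathscr{L}_{<D}^\bullet([1,q-1])$.

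With these two corrections your proof is complete, including the weight count for the consequence.
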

\begin{proof}
Since $R_1$ is a binary relation and the operators
$\mathscr{C}_{\ww{n}}^\bullet$, $\mathscr{BC}_q^{\bullet,m-1}$, and
$\mathscr{B}_{\ww{s}}^\bullet$ preserve binary relations (see \cite[Proposition~A.4]{CCM23}), the pair
\[
\mathscr{B}_{\ww{s}}^\bullet
\left(
\mathscr{BC}_q^{\bullet,m-1}
\left(
\mathscr{C}_{\ww{n}}^\bullet(R_1)
\right)
\right)
\]
is a binary relation. Suppose first that $\ww{s}\neq\varnothing$. By the definition of
$\mathscr{B}_{\ww{s}}^\bullet$, the second component of this binary
relation is zero. Thus, we obtain
\[
\mathscr{B}_{\ww{s}}^\bullet
\left(
\mathscr{BC}_q^{\bullet,m-1}
\left(
\mathscr{C}_{\ww{n}}^\bullet(R_1)
\right)
\right)
=
\left(
\sA^\bullet(\ww{s};m;\ww{n}),0
\right)
\]
and 
\[
\mathscr{L}_d^\bullet
\left(
\sA^\bullet(\ww{s};m;\ww{n})
\right)
=
0
\qquad(d\in\ZZ).
\]
It follows immediately that
\[
\mathscr{L}_{\Acal_K}^\bullet
\left(
\sA^\bullet(\ww{s};m;\ww{n})
\right)
=
0.
\]

Now suppose that $\ww{s}=\varnothing$. Since
$\mathscr{B}_\varnothing^\bullet=\id$, write
\[
(P,Q)
=
\mathscr{BC}_q^{\bullet,m-1}
\left(
\mathscr{C}_{\ww{n}}^\bullet(R_1)
\right).
\]
By the definitions of $\mathscr{C}_{\ww{n}}^\bullet$ and
$\mathscr{BC}_q^{\bullet,m-1}$, the second component is
\[
Q
=
-L_1^m
\alpha_{1;(q-1)}^{\bullet,m}(\ww{n}),
\]
while
$
P+Q
=
\sA^\bullet(\varnothing;m;\ww{n})
$. Moreover, for every $d\geq0$, we have
\[
\mathscr{L}_d^\bullet(P)
+
\mathscr{L}_{d+1}^\bullet(Q)
=
0.
\]
Using
$P
=
\sA^\bullet(\varnothing;m;\ww{n})-Q$,
we obtain
\[
\mathscr{L}_d^\bullet
\left(
\sA^\bullet(\varnothing;m;\ww{n})
\right)
=
L_1^m
\left(
\mathscr{L}_{d+1}^\bullet
-
\mathscr{L}_d^\bullet
\right)
\left(
\alpha_{1;(q-1)}^{\bullet,m}(\ww{n})
\right).
\]
Notice that for $m\geq 1$, $\alpha_{1;(q-1)}^{\bullet,m}(\ww{n})$ has depth at least two, and hence \[
\mathscr{L}_{0}^\bullet(\alpha_{1;(q-1)}^{\bullet,m}(\ww{n}))=0.
\] For $D\geq 0$, summing over $0\leq d<D$ gives
\[
\mathscr{L}_{<D}^\bullet
\left(
\sA^\bullet(\varnothing;m;\ww{n})
\right)
=
L_1^m
\mathscr{L}_D^\bullet
\left(
\alpha_{1;(q-1)}^{\bullet,m}(\ww{n})
\right).
\]
By the definition of $\alpha_{1;(q-1)}^\bullet$ and the properties of $\mathscr{L}_{<D}^{\bullet}$ (see \cite[Proposition~2.7]{CCM23}), we have
\[
\begin{aligned}
L_1^m
\mathscr{L}_D^\bullet
\left(
\alpha_{1;(q-1)}^{\bullet,m}(\ww{n})
\right)
=
\left(
L_1\mathscr{S}_D^\bullet(1)
\mathscr{L}_{<D}^\bullet([q-1])
\right)
L_1^{m-1}
\mathscr{L}_{<D}^\bullet
\left(
\alpha_{1;(q-1)}^{\bullet,m-1}(\ww{n})
\right).
\end{aligned}
\]
On the other hand, summing the fundamental binary relation
\[
\mathscr{L}_d^\bullet([q])
-
L_1\mathscr{L}_{d+1}^\bullet([1,q-1])
=
0
\]
over $0\leq d<D$ yields
\[
L_1\mathscr{S}_D^\bullet(1)
\mathscr{L}_{<D}^\bullet([q-1])
=
\mathscr{L}_{<D}^\bullet([q])
-
L_1\mathscr{L}_{<D}^\bullet([1,q-1]).
\]
Consequently,
\[
\begin{aligned}
\mathscr{L}_{<D}^\bullet
\left(
\sA^\bullet(\varnothing;m;\ww{n})
\right)=
\left(
\mathscr{L}_{<D}^\bullet([q])
-
L_1\mathscr{L}_{<D}^\bullet([1,q-1])
\right)
L_1^{m-1}
\mathscr{L}_{<D}^\bullet
\left(
\alpha_{1;(q-1)}^{\bullet,m-1}(\ww{n})
\right).
\end{aligned}
\]
Therefore, it follows from the definitions that 
\[
\mathscr{L}_{\Acal_K}^\bullet
\left(
\sA^\bullet(\varnothing;m;\ww{n})
\right)
=
\gfk_{\Acal_K}
\mathscr{L}_{\Acal_K}^\bullet
\left(
L_1^{m-1}
\alpha_{1;(q-1)}^{\bullet,m-1}(\ww{n})
\right)\in
\gfk_{\Acal_K}
\Zcal_{\Acal_K,w-q}^\bullet
\]
for $w=mq+\wt(\ww{n})$.

Finally, let
$P\in\ker(\mathscr{L}_\infty^\bullet)$ be homogeneous of weight $w$.
By \cref{prop:Mishiba-kernel}, $P$ is a $K$-linear combination of
homogeneous elements
$\sA^\bullet(\ww{s};m;\ww{n})$ of weight $w$, and hence
\[
\mathscr{L}_{\Acal_K}^\bullet(P)
\in
\gfk_{\Acal_K}
\Zcal_{\Acal_K,w-q}^\bullet,
\]
as desired.
\end{proof}

\begin{cor}\label{cor:finite-realization-span}
For every $\bullet\in\{\zeta,\Li\}$ and $w\geq0$, we have
\[
\Zcal_{\Acal_K,w}^\bullet
=
\Zcal_{\Acal_K,w}^{\bullet,\mathrm{T}}
+
\gfk_{\Acal_K}
\Zcal_{\Acal_K,w-q}^\bullet.
\]
Moreover, we have
\[
\Zcal_{\Acal_K,w}^\zeta
=
\Zcal_{\Acal_K,w}^\Li
=
\Zcal_{\Acal_K,w}
\]
for every $w\geq0$. In particular,
\[
\Zcal_{\Acal_K}^\zeta
=
\Zcal_{\Acal_K}^\Li
=
\Zcal_{\Acal_K}.
\]
\end{cor}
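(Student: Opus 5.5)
The plan is to prove the first identity by strong induction on $w$, then compare the $\zeta$- and $\Li$-sides weight by weight, and finally pass to the algebras.

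\textbf{Step 1: the identity $\Zcal_{\Acal_K,w}^\bullet=\Zcal_{\Acal_K,w}^{\bullet,\mathrm{T}}+\gfk_{\Acal_K}\Zcal_{\Acal_K,w-q}^\bullet$.} The inclusion ``$\supseteq$'' is immediate. The first summand is spanned by a subset of the generators. For the second, note that $\gfk_{\Acal_K}$ is a combination of generators of weight $q$, and $\Zcal_{\Acal_K,q}^\bullet\Zcal_{\Acal_K,w-q}^\bullet\subseteq\Zcal_{\Acal_K,w}^\bullet$ by \cref{thm:finite-character}. (When $w<q$ the second summand is zero by convention.)

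For ``$\subseteq$'', fix an index $\ww{s}$ of weight $w$. By \cref{thm:basis_for_infty}, $\mathscr{L}_\infty^\bullet([\ww{s}])$ lies in the $K$-span of $\mathscr{L}_\infty^\bullet([\ww{t}])$ with $\ww{t}\in\Ical_w^{\mathrm{T}}$. Choose coefficients $c_{\ww{t}}\in K$ with
\[
P=[\ww{s}]-\sum_{\ww{t}\in\Ical_w^{\mathrm{T}}}c_{\ww{t}}[\ww{t}]\in\ker(\mathscr{L}_\infty^\bullet)\cap\Hfk_w .
\]
Then \cref{lem:finite-CCM-relations} gives $\mathscr{L}_{\Acal_K}^\bullet(P)\in\gfk_{\Acal_K}\Zcal_{\Acal_K,w-q}^\bullet$, which is exactly the required decomposition. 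No induction is actually needed for this step, since the lemma already lands in $\Zcal^\bullet_{\Acal_K,w-q}$.

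\textbf{Step 2: the equality $\Zcal_{\Acal_K,w}^\zeta=\Zcal_{\Acal_K,w}^\Li$.} I argue by strong induction on $w$, using Step 1 for both $\bullet=\zeta$ and $\bullet=\Li$. The key input is that the $\mathrm{T}$-parts coincide: $\Zcal_{\Acal_K,w}^{\zeta,\mathrm{T}}=\Zcal_{\Acal_K,w}^{\Li,\mathrm{T}}$. For $\ww{s}\in\Ical^{\mathrm{T}}$ all entries satisfy $s_i\le q$, so Carlitz's formula gives $\mathscr{S}_d^\zeta(s_i)=\mathscr{S}_d^\Li(s_i)$ for every $d$. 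Hence $\mathscr{L}_{\Acal_K}^\zeta([\ww{s}])=\mathscr{L}_{\Acal_K}^\Li([\ww{s}])$ term by term, before reducing mod $v$. The base cases $w<q$ reduce to these $\mathrm{T}$-parts, and the inductive step uses the hypothesis $\Zcal^\zeta_{\Acal_K,w-q}=\Zcal^\Li_{\Acal_K,w-q}$ for the $\gfk_{\Acal_K}$-term.

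\textbf{Step 3: identification with $\Zcal_{\Acal_K,w}$ and passage to algebras.} Since $\mathscr{L}_{\Acal_K}^\zeta([\ww{s}])=\zeta_{\Acal_K}(\ww{s})$, we have $\Zcal_{\Acal_K,w}^\zeta=\Zcal_{\Acal_K,w}$ by definition. For the algebras, $\Zcal_{\Acal_K}^\bullet=\sum_w\Zcal_{\Acal_K,w}^\bullet$ by \cref{thm:finite-character}, which gives the final equality.

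The only subtle point is Carlitz's formula at $s=q$: the $\mathrm{T}$-indices can contain the entry $q$ in non-final positions, so the formula must hold in the range $1\le s\le q$ as stated. Beyond that, the main work has already been done in \cref{lem:finite-CCM-relations}.
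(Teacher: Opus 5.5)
Your proposal is correct and follows essentially the same route as the paper. The inclusion $\subseteq$ comes from \cref{thm:basis_for_infty} together with \cref{lem:finite-CCM-relations} applied to $[\ww{s}]-P_{\ww{s}}$, the reverse inclusion from $\gfk_{\Acal_K}\in\Zcal^\bullet_{\Acal_K,q}$ and multiplicativity, and the equality $\Zcal^\zeta_{\Acal_K,w}=\Zcal^\Li_{\Acal_K,w}$ from induction on $w$, using that the $\mathrm{T}$-parts coincide by Carlitz's formula.
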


\begin{proof}
Fix $\bullet\in\{\zeta,\Li\}$ and $w\geq0$. Let
$\ww{s}\in\Ical$ be an index with $\wt(\ww{s})=w$. By
\cref{thm:basis_for_infty}, there exists
$
P_{\ww{s}}\in
\Span_K
\{
[\ww{n}]
:
\ww{n}\in\Ical_w^{\mathrm{T}}\}$
such that
\[
\mathscr{L}_\infty^\bullet([\ww{s}])
=
\mathscr{L}_\infty^\bullet(P_{\ww{s}}).
\]
Thus
$[\ww{s}]-P_{\ww{s}}
\in
\ker(\mathscr{L}_\infty^\bullet)$.
By \cref{lem:finite-CCM-relations},
\[
\mathscr{L}_{\Acal_K}^\bullet([\ww{s}])
-
\mathscr{L}_{\Acal_K}^\bullet(P_{\ww{s}})
\in
\gfk_{\Acal_K}
\Zcal_{\Acal_K,w-q}^\bullet.
\]
Thus, we obtain
\[
\Zcal_{\Acal_K,w}^\bullet
\subseteq
\Zcal_{\Acal_K,w}^{\bullet,\mathrm{T}}
+
\gfk_{\Acal_K}
\Zcal_{\Acal_K,w-q}^\bullet.
\]
Conversely, note that
\[
\Zcal_{\Acal_K,w}^{\bullet,\mathrm{T}}
\subseteq
\Zcal_{\Acal_K,w}^\bullet\qquad\text{and}\qquad
\gfk_{\Acal_K}
=
\mathscr{L}_{\Acal_K}^\bullet([q])
-
L_1\mathscr{L}_{\Acal_K}^\bullet([1,q-1])
\in
\Zcal_{\Acal_K,q}^\bullet.
\]
Hence,
\[
\Zcal_{\Acal_K,w}^\bullet
=
\Zcal_{\Acal_K,w}^{\bullet,\mathrm{T}}
+
\gfk_{\Acal_K}
\Zcal_{\Acal_K,w-q}^\bullet.
\]

Next, for every $\ww{s}\in\Ical^{\mathrm{T}}$, note that 
$
\mathscr{L}_{\Acal_K}^\zeta([\ww{s}])
=
\mathscr{L}_{\Acal_K}^\Li([\ww{s}])$ and
therefore
\[
\Zcal_{\Acal_K,w}^{\zeta,\mathrm{T}}
=
\Zcal_{\Acal_K,w}^{\Li,\mathrm{T}}.
\]
It follows by induction on
$w$ that
\[
\Zcal_{\Acal_K,w}^\zeta
=
\Zcal_{\Acal_K,w}^\Li
\]
for every $w\geq0$, which completes the proof.
\end{proof}

\begin{thm}\label{thm:finite-comparison-map}
There exists a well-defined surjective
$K$-algebra homomorphism
\[
\widetilde{\Phi}_{\infty,\Acal_K}:
\Zcal_{\infty}
\longrightarrow
{\Zcal_{\Acal_K}}
/{\gfk_{\Acal_K}\Zcal_{\Acal_K}}
\]
such that, for every $\bullet\in\{\zeta,\Li\}$ and
$\ww{s}\in\Ical$,
\[
\widetilde{\Phi}_{\infty,\Acal_K}
\left(
\mathscr{L}_\infty^\bullet([\ww{s}])
\right)
=
\mathscr{L}_{\Acal_K}^\bullet([\ww{s}])
\bmod
\gfk_{\Acal_K}\Zcal_{\Acal_K}.
\]
Moreover,
\[
\zeta_A(q-1)\Zcal_{\infty}
\subseteq
\ker\widetilde{\Phi}_{\infty,\Acal_K}.
\]
Hence $\widetilde{\Phi}_{\infty,\Acal_K}$ induces a well-defined
surjective $K$-algebra homomorphism
\[
\Phi_{\infty,\Acal_K}:
{\Zcal_{\infty}}
/{\zeta_A(q-1)\Zcal_{\infty}}
\longrightarrow
{\Zcal_{\Acal_K}}
/{\gfk_{\Acal_K}\Zcal_{\Acal_K}}.
\]
\end{thm}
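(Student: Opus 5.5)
The plan is to define $\widetilde{\Phi}_{\infty,\Acal_K}$ by lifting through the $\infty$-adic realization. Fix $\bullet\in\{\zeta,\Li\}$. Since $\mathscr{L}_\infty^\bullet:\Hfk\to\Zcal_\infty$ is surjective (its image is the span of all $\zeta_A(\ww{s})$ or all $\Li_{\ww{s}}(\mathbf{1})$, which is $\Zcal_\infty$ by \cref{thm:strict-bullet-character} and \cite[Corollary~3.7]{CCM23}), we set
\[
\widetilde{\Phi}_{\infty,\Acal_K}\bigl(\mathscr{L}_\infty^\bullet(P)\bigr)=\mathscr{L}_{\Acal_K}^\bullet(P)\bmod\gfk_{\Acal_K}\Zcal_{\Acal_K}.
\]
By \cref{cor:finite-realization-span}, the target lies in $\Zcal_{\Acal_K}/\gfk_{\Acal_K}\Zcal_{\Acal_K}$.

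For well-definedness we must show that $\ker\mathscr{L}_\infty^\bullet$ maps into $\gfk_{\Acal_K}\Zcal_{\Acal_K}$. The realization $\mathscr{L}_\infty^\bullet$ respects the weight grading, and the sum $\sum_w\Zcal_{\infty,w}$ is direct: this follows from the basis statement \cref{thm:basis_for_infty} together with \cref{prop:Mishiba-kernel}, which says the kernel is spanned by homogeneous relations. So any kernel element decomposes into homogeneous kernel components, and \cref{lem:finite-CCM-relations} sends each of these into $\gfk_{\Acal_K}\Zcal_{\Acal_K,w-q}$. The map is then $K$-linear.

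It is multiplicative because, by \cref{thm:strict-bullet-character} and \cref{thm:finite-character},
\[
\mathscr{L}_\infty^\bullet(P)\mathscr{L}_\infty^\bullet(Q)=\mathscr{L}_\infty^\bullet(P\ast^\bullet Q),
\]
and $\mathscr{L}_{\Acal_K}^\bullet(P\ast^\bullet Q)$ is the corresponding product on the finite side. Surjectivity holds because the classes of the $\zeta_{\Acal_K}(\ww{s})$ generate the target, and the map is multiplicative.

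The maps built from $\bullet=\zeta$ and from $\bullet=\Li$ must coincide. Both are linear, so it suffices to compare them on the basis $\{\zeta_A(\ww{s}):\ww{s}\in\Ical^{\mathrm T}\}$ of \cref{thm:basis_for_infty}. There $\mathscr{L}_\infty^\zeta([\ww{s}])=\mathscr{L}_\infty^\Li([\ww{s}])$, and also $\mathscr{L}_{\Acal_K}^\zeta([\ww{s}])=\mathscr{L}_{\Acal_K}^\Li([\ww{s}])$. The latter holds because the inner sums agree termwise, by Carlitz's formula $\mathscr{S}^\zeta_d(s)=\mathscr{S}^\Li_d(s)$ for $s\le q$. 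So a single map satisfies both formulas.

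The remaining point, which I expect to be the main obstacle, is $\zeta_A(q-1)\Zcal_\infty\subseteq\ker\widetilde{\Phi}_{\infty,\Acal_K}$. Since the map is an algebra homomorphism, it suffices to show $\zeta_{\Acal_K}(q-1)\in\gfk_{\Acal_K}\Zcal_{\Acal_K}$. I would attempt to prove the stronger claim $\zeta_{\Acal_K}(q-1)=0$. Modulo $v$ with $\deg v=D$,
\[
\sum_{d<D}S_d(q-1)=\sum_{d<D}\mathscr{S}^\Li_d(q-1)=\sum_{d<D}L_d^{-(q-1)}.
\]
I would prove the vanishing either through the power sum identity for $\sum_{a\text{ monic},\,\deg a<D}a^{-(q-1)}$, or by summing the binary relation $R_1$ and then reducing modulo $v$. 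A cleaner route uses $\zeta_{\Acal_K}(q-1)=-\zeta^{\ant}_{\Acal_K}(q-1)$ in depth one, so the required vanishing becomes condition~\ref{cond:vanishing} for the finite $\star$-inverse realization. I would verify this directly: reducing modulo $v$, the sum over all nonzero residues mod $v$ of the $(q^D-1)$-periodic power $a^{-(q-1)}$ gives, after dividing by units $\FF_q^\times$, $\sum_{\deg a<D,\,a\neq0}a^{-(q-1)}\equiv0$, because $q^D-1\nmid q-1$ for $D\ge2$. Low-degree $v$ can be ignored, since $\Acal_K$ discards finitely many places.
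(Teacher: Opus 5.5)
Your proposal is correct and follows the paper's proof essentially step for step. You define the map by lifting through $\mathscr{L}_\infty^\bullet$, get well-definedness from \cref{prop:Mishiba-kernel} and \cref{lem:finite-CCM-relations}, multiplicativity from the two character theorems, and surjectivity via \cref{cor:finite-realization-span}, and you show the map does not depend on $\bullet$ by comparing on $\Ical^{\mathrm{T}}$. The one real difference is that the paper simply cites \cite[Proposition~4.1.3(1)]{Shi18} for $\zeta_{\Acal_K}(q-1)=0$, whereas you prove it directly via power sums over $(A/v)^\times\cong\FF_{q^{\deg v}}^\times$, using that $q-1\equiv-1$ and $q^{\deg v}-1\nmid q-1$ for $\deg v\geq2$; that argument is valid. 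Your detour through the directness of $\sum_w\Zcal_{\infty,w}$ is harmless but unnecessary, since $\ker\mathscr{L}_\infty^\bullet=\mathscr{R}^\bullet$ is already spanned by the elements $\sA^\bullet(\ww{s};m;\ww{n})$.
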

\begin{proof}
Fix $\bullet\in\{\zeta,\Li\}$. Define
\[
\widetilde{\Phi}_{\infty,\Acal_K}^\bullet:
\Zcal_{\infty}
\longrightarrow
{\Zcal_{\Acal_K}}
/{\gfk_{\Acal_K}\Zcal_{\Acal_K}}
\]
by
\[
\mathscr{L}_\infty^\bullet(P)
\longmapsto
\mathscr{L}_{\Acal_K}^\bullet(P)
\bmod
\gfk_{\Acal_K}\Zcal_{\Acal_K}.
\]
We first show that this map is well-defined. Suppose that
$P,Q\in\Hfk$ satisfy
\[
\mathscr{L}_\infty^\bullet(P)
=
\mathscr{L}_\infty^\bullet(Q).
\]
Then $P-Q\in\ker(\mathscr{L}_\infty^\bullet)$.
By
\cref{prop:Mishiba-kernel,lem:finite-CCM-relations}, it follows that
\[
\mathscr{L}_{\Acal_K}^\bullet(P)
\equiv
\mathscr{L}_{\Acal_K}^\bullet(Q)
\pmod{\gfk_{\Acal_K}\Zcal_{\Acal_K}},
\]
so $\widetilde{\Phi}_{\infty,\Acal_K}^\bullet$ is well-defined. Moreover, \cref{thm:strict-bullet-character,thm:finite-character} show that
$\widetilde{\Phi}_{\infty,\Acal_K}^\bullet$ is a $K$-algebra
homomorphism. By \cref{cor:finite-realization-span},
$\widetilde{\Phi}_{\infty,\Acal_K}^\bullet$ is surjective.

Finally, for every $\ww{s}\in\Ical^{\mathrm{T}}$, we have
\[
\mathscr{L}_\infty^\zeta([\ww{s}])
=
\mathscr{L}_\infty^\Li([\ww{s}]),
\qquad
\mathscr{L}_{\Acal_K}^\zeta([\ww{s}])
=
\mathscr{L}_{\Acal_K}^\Li([\ww{s}]).
\]
By \cref{thm:basis_for_infty}, these elements span
$\Zcal_{\infty}$ over $K$. Hence
\[
\widetilde{\Phi}_{\infty,\Acal_K}^\zeta
=
\widetilde{\Phi}_{\infty,\Acal_K}^\Li.
\]
We denote this common map by
$\widetilde{\Phi}_{\infty,\Acal_K}$.

Since
\[
\widetilde{\Phi}_{\infty,\Acal_K}
\left(\zeta_A(q-1)\right)
=
\zeta_{\Acal_K}(q-1)
=
0
\]
by \cite[Proposition~4.1.3(1)]{Shi18}, we have
\[
\zeta_A(q-1)\Zcal_{\infty}
\subseteq
\ker\widetilde{\Phi}_{\infty,\Acal_K}.
\]
Thus $\widetilde{\Phi}_{\infty,\Acal_K}$ induces the asserted map
$\Phi_{\infty,\Acal_K}$.
\end{proof}

\begin{lem}\label{lem:finite-star-inverse-space}
Let
$\bullet\in\{\zeta,\Li\}$. Then
\[
\mathscr{L}_{\Acal_K}^{\bullet,\ant}(\Hfk)
=
\mathscr{L}_{\Acal_K}^{\bullet}(\Hfk)
=
\Zcal_{\Acal_K}.
\]
\end{lem}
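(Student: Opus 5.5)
We apply the evaluation-map formalism of \S\ref{sec:formalism} with $B=\Acal_K$, $\ev=\mathscr{L}_{\Acal_K}^\bullet$ and $\ev^\ant=\mathscr{L}_{\Acal_K}^{\bullet,\ant}$. The strategy is to check Conditions \ref{cond:unital}--\ref{cond:ant-product} for this pair, read off the equality of images from \cref{lem:image-comparison}, and then identify the common image with $\Zcal_{\Acal_K}$ using \cref{thm:finite-character} and \cref{cor:finite-realization-span}.

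\textbf{Conditions \ref{cond:unital} and \ref{cond:convolution}.} We apply \cref{lem:degree-convolution} separately at each component $v$. Take $B=A/(v)$, $D=\deg v$ and $\mathscr{S}_d(s)=\mathscr{S}_d^\bullet(s)\bmod v$. The sums are finite, and the denominators are prime to $v$ whenever $d<\deg v$. For $\bullet=\zeta$ this is because monic $a$ of degree $<\deg v$ is coprime to $v$. For $\bullet=\Li$ it is because $L_d$ is the product of the monic irreducibles of degree dividing $i\le d$, all of degree $<\deg v$. The resulting identities hold in every component, so they hold in $\Acal_K$.

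\textbf{Condition \ref{cond:ev-product}.} This is exactly \cref{thm:finite-character}.

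\textbf{Condition \ref{cond:vanishing}.} This is where the real arithmetic input lies, and I expect it to be the main obstacle. For $\bullet=\zeta$ we need
\[
\mathscr{L}_{\Acal_K}^{\zeta,\ant}([j]) = -\Bigl(\sum_{0\le d<\deg v} S_d(j) \bmod v\Bigr)_v = 0 \qquad \text{whenever } q-1\mid j .
\]
This is the depth-one statement $\zeta_{\Acal_K}(j)=0$ for $q-1\mid j$, since in depth one the $\star$-inverse and ordinary finite values differ only by a sign. For $j=q-1$ this is \cite[Proposition~4.1.3(1)]{Shi18}. For general $j$ I would argue directly: $\sum_{\deg a<\deg v,\ a \text{ monic}} a^{-j}$ modulo $v$ is a sum over a set of representatives of the nonzero residues mod $v$ taken up to $\FF_q^\times$-scaling. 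The function $x\mapsto x^{-j}$ is invariant under $\FF_q^\times$ exactly when $q-1\mid j$. Hence the sum equals $\frac{1}{q-1}\sum_{x\in(A/v)^\times}x^{-j}$, i.e.\ $-\sum_{x\in(A/v)^\times}x^{-j}$, which vanishes unless $q^{\deg v}-1\mid j$. That exceptional case occurs only for finitely many $v$, which is harmless in $\Acal_K$.

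For $\bullet=\Li$ we only need $j=q-1$. Here $\mathscr{S}_d^\Li(q-1)=\mathscr{S}_d^\zeta(q-1)$ by Carlitz's formula, so this case reduces to the previous one.

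\textbf{Conclusion.} \cref{lem:bullet-stuffle-transfer} now gives Condition \ref{cond:ant-product}. Then \cref{lem:image-comparison} yields $\mathscr{L}_{\Acal_K}^{\bullet,\ant}(\Hfk)=\mathscr{L}_{\Acal_K}^\bullet(\Hfk)$.

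It remains to show $\mathscr{L}_{\Acal_K}^\bullet(\Hfk)=\Zcal_{\Acal_K}$. By \cref{thm:finite-character}, $\mathscr{L}_{\Acal_K}^\bullet(\Hfk)=\sum_w\Zcal_{\Acal_K,w}^\bullet$ is closed under products and contains $1$, so it equals the algebra $\Zcal_{\Acal_K}^\bullet$. Finally, $\Zcal_{\Acal_K}^\bullet=\Zcal_{\Acal_K}$ by \cref{cor:finite-realization-span}.
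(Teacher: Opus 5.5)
Your proposal is correct and follows the paper's proof essentially step for step. Conditions \ref{cond:unital}, \ref{cond:convolution} and \ref{cond:ev-product} come from \cref{lem:degree-convolution} and \cref{thm:finite-character}. Condition \ref{cond:vanishing} comes from the depth-one vanishing of $\zeta_{\Acal_K}(j)$ for $q-1\mid j$, together with Carlitz's formula for $\bullet=\Li$. The conclusion then follows from \cref{lem:bullet-stuffle-transfer}, \cref{lem:image-comparison} and \cref{cor:finite-realization-span}.

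There are two minor differences, and both are fine:
\begin{enumerate}
\item You prove that vanishing directly by the $\FF_q^\times$-orbit argument, which is correct including the case $q=2$, rather than citing \cite[Proposition~4.1.3(1)]{Shi18}.
\item You verify \ref{cond:unital}--\ref{cond:convolution} place by place. This is actually more careful than a single application of \cref{lem:degree-convolution}, because the cutoff $D=\deg v$ depends on $v$. Formally, $A/(v)$ is not a $K$-algebra, so apply \cref{lem:degree-convolution} with $B=K$ and $D=\deg v$ (the sums are finite with denominators prime to $v$), and then reduce modulo $v$.
\end{enumerate}
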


\begin{proof}
We apply \cref{lem:image-comparison} to the case
\[
B=\Acal_K,
\qquad
\ev=\mathscr{L}_{\Acal_K}^\bullet,
\qquad
\ev^\ant=\mathscr{L}_{\Acal_K}^{\bullet,\ant}.
\]
By \cref{lem:degree-convolution}, Conditions
\ref{cond:unital} and \ref{cond:convolution} hold, while \cref{thm:finite-character} gives Condition
\ref{cond:ev-product}. We next verify Condition \ref{cond:vanishing}. If $\bullet=\zeta$ and
$q-1\mid j$, then
\[
\ev^\ant([j])
=
-\zeta_{\Acal_K}(j)
=
0
\]
by \cite[Proposition~4.1.3(1)]{Shi18}
(see also \cite[Corollary~3.21]{Tsu26}).
If $\bullet=\Li$, then
\[
\ev^\ant([q-1])
=
-\Li_{\Acal_K,q-1}(\mathbf{1})
=
-\zeta_{\Acal_K}(q-1)
=
0.
\]
Hence, \cref{lem:bullet-stuffle-transfer} gives Condition
\ref{cond:ant-product}. Therefore, by \cref{lem:image-comparison} and \cref{cor:finite-realization-span}, we obtain
\[
\mathscr{L}_{\Acal_K}^{\bullet,\ant}(\Hfk)
=
\mathscr{L}_{\Acal_K}^{\bullet}(\Hfk)=\Zcal_{\Acal_K}.
\]
\end{proof}

\begin{thm}\label{thm:finite-star-inverse-comparison}
For every
$\bullet\in\{\zeta,\Li\}$ and $\ww{s}\in\Ical$, we have
\[
\widetilde{\Phi}_{\infty,\Acal_K}
\left(
\mathscr{L}_\infty^{\bullet,\ant}([\ww{s}])
\right)
=
\mathscr{L}_{\Acal_K}^{\bullet,\ant}([\ww{s}])
\bmod
\gfk_{\Acal_K}\Zcal_{\Acal_K}.
\]
\end{thm}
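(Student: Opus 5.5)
The plan is to exploit the convolution identity (C2), which determines $\ev^\ant$ recursively from $\ev$ in both the $\infty$-adic and the finite setting, and then push this recursion through $\widetilde{\Phi}_{\infty,\Acal_K}$. Concretely, by \cref{lem:degree-convolution} (with $D=\infty$, $B=K_\infty$, resp. $D=\deg v$ componentwise, $B=\Acal_K$), the pairs $(\mathscr{L}_\infty^\bullet,\mathscr{L}_\infty^{\bullet,\ant})$ and $(\mathscr{L}_{\Acal_K}^\bullet,\mathscr{L}_{\Acal_K}^{\bullet,\ant})$ both satisfy Conditions \ref{cond:unital} and \ref{cond:convolution}. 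Thus for a non-empty index $\ww{s}$ of depth $r$ we have, in either realization,
\[
\mathscr{L}^{\bullet,\ant}([\ww{s}])
=
-\sum_{i=0}^{r-1}
\mathscr{L}^{\bullet,\ant}([\ww{s}[:i]])\,
\mathscr{L}^{\bullet}([\ww{s}[i+1:]]),
\]
obtained from $m_B(\ev^\ant\otimes\ev)\Delta([\ww{s}])=0$ by isolating the term $i=r$. For the $\infty$-adic side I will apply this in $K_\infty$; the product is of elements in $\Zcal_\infty$ once we know $\mathscr{L}_\infty^{\bullet,\ant}([\ww{s}[:i]])\in\Zcal_\infty$, which is the first assertion of \cref{lem:infty-star-inverse-space}.

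The proof is then an induction on $\dep(\ww{s})$. For $\ww{s}=\varnothing$ both sides are $1$. Assuming the claim for all indices of depth $<r$, apply the $K$-algebra homomorphism $\widetilde{\Phi}_{\infty,\Acal_K}$ of \cref{thm:finite-comparison-map} to the displayed recursion: each factor $\mathscr{L}_\infty^{\bullet,\ant}([\ww{s}[:i]])$ with $i<r$ maps to $\mathscr{L}_{\Acal_K}^{\bullet,\ant}([\ww{s}[:i]])$ modulo $\gfk_{\Acal_K}\Zcal_{\Acal_K}$ by the induction hypothesis, and each factor $\mathscr{L}_\infty^\bullet([\ww{s}[i+1:]])$ maps to $\mathscr{L}_{\Acal_K}^\bullet([\ww{s}[i+1:]])$ by the defining property in \cref{thm:finite-comparison-map} (valid for both $\bullet=\zeta,\Li$). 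Multiplicativity of $\widetilde{\Phi}_{\infty,\Acal_K}$ and of the quotient map then gives that $\widetilde{\Phi}_{\infty,\Acal_K}(\mathscr{L}_\infty^{\bullet,\ant}([\ww{s}]))$ equals the right-hand side of the finite recursion, i.e.\ $\mathscr{L}_{\Acal_K}^{\bullet,\ant}([\ww{s}])$, modulo $\gfk_{\Acal_K}\Zcal_{\Acal_K}$.

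The only point requiring care, and the one I regard as the main obstacle, is making sure that everything lives in the right algebras so the recursion can legitimately be transported: one needs $\mathscr{L}_\infty^{\bullet,\ant}$ to land in $\Zcal_\infty$ (supplied by \cref{lem:infty-star-inverse-space}) so that $\widetilde{\Phi}_{\infty,\Acal_K}$ can be applied at all, and one needs $\mathscr{L}_{\Acal_K}^{\bullet,\ant}([\ww{s}])\in\Zcal_{\Acal_K}$ so that its class modulo $\gfk_{\Acal_K}\Zcal_{\Acal_K}$ makes sense, which is \cref{lem:finite-star-inverse-space}. With these two inputs no further properties (such as Condition \ref{cond:vanishing} or the product formulas for $\ev^\ant$) are needed, since the recursion only involves the linear convolution identity together with the multiplicativity of $\widetilde{\Phi}_{\infty,\Acal_K}$ already established.
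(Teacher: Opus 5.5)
Your argument is correct, but it is organized differently from the paper's.

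The paper works in the product algebra $B=\Zcal_{\infty}/\zeta_A(q-1)\Zcal_{\infty}\times\Zcal_{\Acal_K}/\gfk_{\Acal_K}\Zcal_{\Acal_K}$. It verifies Conditions~\ref{cond:unital}--\ref{cond:ev-product} componentwise, with the vanishing Condition~\ref{cond:vanishing} coming from cited results in both components. It then gets Condition~\ref{cond:ant-product} from \cref{lem:bullet-stuffle-transfer}. Finally it invokes \cref{lem:image-comparison} to produce a single $P\in\Hfk$ with $\ev^\ant([\ww{s}])=\ev(P)$ in both components at once, and applies \cref{thm:finite-comparison-map} to that $P$.

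You instead unroll the depth induction underlying the inclusion $\ev^\ant(\Hfk)\subseteq\ev(\Hfk)$. You push the convolution recursion through the multiplicative map $\widetilde{\Phi}_{\infty,\Acal_K}$ one depth at a time. The paper only ever uses that inclusion, and by \cref{lem:image-comparison} it needs just Conditions~\ref{cond:unital}, \ref{cond:convolution}, and~\ref{cond:ev-product}. So your remark that Conditions~\ref{cond:vanishing} and~\ref{cond:ant-product} are superfluous for this theorem is right, and your route makes this transparent.

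One caveat: by citing \cref{lem:finite-star-inverse-space} you implicitly import those two conditions through its proof. You can avoid this. Your own recursion, together with $\mathscr{L}_{\Acal_K}^{\bullet}(\Hfk)=\Zcal_{\Acal_K}$ from \cref{cor:finite-realization-span} (needed when $\bullet=\Li$), gives $\mathscr{L}_{\Acal_K}^{\bullet,\ant}(\Hfk)\subseteq\Zcal_{\Acal_K}$ by induction on depth. The same argument gives $\mathscr{L}_{\infty}^{\bullet,\ant}(\Hfk)\subseteq\Zcal_{\infty}$. Your proof is then self-contained modulo \cref{thm:finite-comparison-map}.

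What the paper's packaging buys is uniformity with the evaluation-map formalism. It also yields the stronger equality $\ev(\Hfk)=\ev^\ant(\Hfk)$ in $B$, which is more than this theorem requires.
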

\begin{proof}
Fix $\bullet\in\{\zeta,\Li\}$. By \cref{thm:finite-comparison-map},
$\widetilde{\Phi}_{\infty,\Acal_K}$ factors through the quotient
$\Zcal_{\infty}/\zeta_A(q-1)\Zcal_{\infty}$.
Hence it suffices to show that
\[
\Phi_{\infty,\Acal_K}
\left(
\mathscr{L}_\infty^{\bullet,\ant}([\ww{s}])
\bmod \zeta_A(q-1)\Zcal_{\infty}
\right)
=
\mathscr{L}_{\Acal_K}^{\bullet,\ant}([\ww{s}])
\bmod \gfk_{\Acal_K}\Zcal_{\Acal_K}.
\] 
We now put
\[
B={\Zcal_{\infty}}
/{\zeta_A(q-1)\Zcal_{\infty}}
\times
{\Zcal_{\Acal_K}}
/{\gfk_{\Acal_K}\Zcal_{\Acal_K}}.
\]
Define $K$-linear maps
$\ev,\ev^\ant:\Hfk\longrightarrow B$
by
\[
\ev(P)
=
\left(
\mathscr{L}_\infty^\bullet(P)
\bmod \zeta_A(q-1)\Zcal_{\infty},
\mathscr{L}_{\Acal_K}^\bullet(P)
\bmod \gfk_{\Acal_K}\Zcal_{\Acal_K}\right)
\]
and
\[
\ev^\ant(P)
=
\left(
\mathscr{L}_\infty^{\bullet,\ant}(P)
\bmod \zeta_A(q-1)\Zcal_{\infty},
\mathscr{L}_{\Acal_K}^{\bullet,\ant}(P)
\bmod \gfk_{\Acal_K}\Zcal_{\Acal_K}\right).
\]
This is well-defined by \cref{lem:finite-star-inverse-space,lem:infty-star-inverse-space}. By \cref{lem:degree-convolution}, Conditions
\ref{cond:unital} and \ref{cond:convolution} hold componentwise, while \cref{thm:strict-bullet-character,thm:finite-character} give Condition
\ref{cond:ev-product}. 

Moreover, Condition \ref{cond:vanishing} holds for the $\infty$-adic
component by \cite[\S4.1]{TsuCoaction26}, and for the finite component by
\cite[Proposition~4.1.3(1)]{Shi18} (see also
\cite[Corollary~3.21]{Tsu26}).
Hence \cref{lem:bullet-stuffle-transfer} gives Condition
\ref{cond:ant-product}. Therefore, by \cref{lem:image-comparison},
\[
\ev(\Hfk)=\ev^\ant(\Hfk).
\]
In particular, for every $\ww{s}\in\Ical$, there exists
$P\in\Hfk$ such that
\[
\mathscr{L}_\infty^{\bullet,\ant}([\ww{s}])
\equiv
\mathscr{L}_\infty^\bullet(P)
\pmod{\zeta_A(q-1)\Zcal_{\infty}}
\]
and
\[
\mathscr{L}_{\Acal_K}^{\bullet,\ant}([\ww{s}])
\equiv
\mathscr{L}_{\Acal_K}^\bullet(P) \pmod{ \gfk_{\Acal_K}\Zcal_{\Acal_K}}.
\]
Thus, by \cref{thm:finite-comparison-map},
\[
\begin{aligned}
&
\Phi_{\infty,\Acal_K}
\left(
\mathscr{L}_\infty^{\bullet,\ant}([\ww{s}])
\bmod \zeta_A(q-1)\Zcal_{\infty}
\right)
\\
&\qquad=
\Phi_{\infty,\Acal_K}
\left(
\mathscr{L}_\infty^\bullet(P)
\bmod \zeta_A(q-1)\Zcal_{\infty}
\right)
\\
&\qquad=
\mathscr{L}_{\Acal_K}^\bullet(P)
\bmod \gfk_{\Acal_K}\Zcal_{\Acal_K}
\\
&\qquad=
\mathscr{L}_{\Acal_K}^{\bullet,\ant}([\ww{s}])
\bmod \gfk_{\Acal_K}\Zcal_{\Acal_K},
\end{aligned}
\]
as desired.
\end{proof}

\section{Dimension Formulas and a Comparison Conjecture}\label{sec:dimension}
\subsection{Dimension Formulas}
We now compare the dimensions of the spaces of $\infty$-adic, $v$-adic,
and finite multiple zeta values.

We fix a finite place $v$ of $K$ and consider the notion of \textit{$v$-adic multiple zeta values}. We refer to
\cite{CM19,CM21,CCM22} for details.
We write $\Zcal_v$ for the $K$-algebra generated by all $v$-adic multiple zeta values $\zeta_A(\ww{s})_v$ and 
\[
\Zcal_{v,w}=\Span_K\{\zeta_A(\ww{s})_v:\ww{s}\in\Ical,\ \wt(\ww{s})=w\}
\] 
for all $w\geq 0$.

We define the following three power series:
\[
\begin{aligned}
H_\infty(X)
&=
\frac{1-X^q}
{1-X-X^2-\cdots-X^q}
=
\sum_{w\geq0}D_{\infty,w}X^w,\\
H_v(X)
&=
\frac{(1-X^{q-1})(1-X^q)}
{1-X-X^2-\cdots-X^q}
=
\sum_{w\geq0}D_{v,w}X^w,\\
H_{\Acal_K}(X)
&=
\frac{1-X^{q-1}}
{1-X-X^2-\cdots-X^q}
=
\sum_{w\geq0}D_{\Acal_K,w}X^w.
\end{aligned}
\]
One checks that
\[
D_{\infty,w}
=
\begin{cases}
1, & w=0,\\[1mm]
2^{w-1}, & 1\leq w<q,\\[1mm]
2^{w-1}-1, & w=q,\\[1mm]
\displaystyle\sum_{i=1}^q D_{\infty,w-i}, & w>q,
\end{cases}
\]
\[
D_{v,w}
=
\begin{cases}
1, & w=0,\\[1mm]
2^{w-1}, & 1\leq w<q-1,\\[1mm]
2^{q-2}-1, & w=q-1,\\[1mm]
2^{q-1}-2, & w=q,\\[1mm]
\displaystyle\sum_{i=1}^q D_{v,w-i}+1, & w=2q-1,\\[1mm]
\displaystyle\sum_{i=1}^q D_{v,w-i}, & w>q,\ w\neq 2q-1,
\end{cases}
\]
and
\[
D_{\Acal_K,w}
=
\begin{cases}
1, & w=0,\\[1mm]
2^{w-1}, & 1\leq w<q-1,\\[1mm]
2^{w-1}-1, & w=q-1,q,\\[1mm]
\displaystyle\sum_{i=1}^q D_{\Acal_K,w-i}, & w>q.
\end{cases}
\]
These three sequences are closely related to the dimensions of the
corresponding spaces of multiple zeta values.

For $\infty$-adic multiple zeta values, Todd conjectured
\cite[Conjecture~7.1]{Tod18} that
\[
\dim_K\Zcal_{\infty,w}
=
D_{\infty,w}=\#\Ical_w^{\mathrm{T}}
\qquad(w\geq0),
\]
which now follows from \cref{thm:basis_for_infty}. 

For $v$-adic multiple zeta values, Chang, Chen, and Mishiba \cite[Theorem 1.2.3]{CCM22} (see also \cite[Theorem 6.4.1]{CM19}) constructed a surjective $K$-algebra homomorphism
\[
\Phi_{\infty,v}:
\Zcal_{\infty}/\zeta_A(q-1)\Zcal_{\infty}
\longrightarrow
\Zcal_v
\]
satisfying
\[
\Phi_{\infty,v}
\left(
\zeta_A(\ww{s})
\bmod \zeta_A(q-1)\Zcal_{\infty}
\right)
=
\zeta_A(\ww{s})_v
\]
for every $\ww{s}\in\Ical$. They further conjectured that this map is an isomorphism.

\begin{conj}[{\cite[Conjecture~5.4.1]{CCM22}}]\label{conj:CCM-v-adic}
The map
\[
\Phi_{\infty,v}:
\Zcal_{\infty}/\zeta_A(q-1)\Zcal_{\infty}
\xrightarrow{\ \sim\ }
\Zcal_v
\]
is an isomorphism of $K$-algebras.
\end{conj}

In particular, this conjectural isomorphism predicts that
\[
\dim_K\Zcal_{v,w}
=
D_{v,w}
\qquad (w\geq0),
\]
while the corresponding upper bound
\[
\dim_K\Zcal_{v,w}
\leq
D_{v,w}
\qquad (w\geq0)
\]
was established in \cite[Corollary~1.8]{CCM23}.

For finite multiple zeta values over $K$, Shi \cite{Shi18} proposed, based on explicit
computations of relations among truncated multiple zeta values, the
following dimension conjecture.

\begin{conj}[{\cite[Conjecture~4.6.1]{Shi18}}]\label{conj:Shi}
For every $w\geq0$, one has
\[
\dim_K\Zcal_{\Acal_K,w}
=
D_{\Acal_K,w}.
\]
\end{conj}

Using the comparison map constructed in
\cref{thm:finite-comparison-map}, we obtain the corresponding upper bound.

\begin{thm}\label{thm:finite-dimension-upper-bound}
For every $w\geq0$, we have
\[
\dim_K\Zcal_{\Acal_K,w}
\leq
D_{\Acal_K,w}.
\]
\end{thm}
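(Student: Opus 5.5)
The proof will be an induction on $w$, using a weight-by-weight version of the comparison map from \cref{thm:finite-comparison-map}. The target bound comes from a recursion for the $D_{\Acal_K,w}$. Comparing generating functions gives
\[
H_{\Acal_K}(X)(1-X^q)=H_\infty(X)(1-X^{q-1}),
\]
so that, with the convention $D_{\bullet,w}=0$ for $w<0$,
\[
D_{\Acal_K,w}=D_{\infty,w}-D_{\infty,w-q+1}+D_{\Acal_K,w-q}\qquad(w\geq0).
\]
The theorem will therefore follow from the two inequalities
\[
\dim_K\bigl(\Zcal_{\Acal_K,w}/\gfk_{\Acal_K}\Zcal_{\Acal_K,w-q}\bigr)\leq D_{\infty,w}-D_{\infty,w-q+1}
\quad\text{and}\quad
\dim_K\Zcal_{\Acal_K,w-q}\leq D_{\Acal_K,w-q}.
\]
The second is the induction hypothesis. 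Here $\Zcal_{\Acal_K,w}=\Zcal_{\Acal_K,w}^\bullet$ by \cref{cor:finite-realization-span}, and $\gfk_{\Acal_K}\Zcal_{\Acal_K,w-q}\subseteq\Zcal_{\Acal_K,w}$ by the weight-multiplicativity recorded before \cref{lem:finite-CCM-relations}.

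For the first inequality, I will fix $\bullet=\zeta$ and define a graded map
\[
\Phi_w:\Zcal_{\infty,w}\to\Zcal_{\Acal_K,w}/\gfk_{\Acal_K}\Zcal_{\Acal_K,w-q},\qquad \mathscr{L}_\infty^\zeta(P)\mapsto \mathscr{L}_{\Acal_K}^\zeta(P)\ \text{for } P\in\Hfk_w.
\]
\begin{enumerate}
\item \emph{Well-definedness.} Since $\mathscr{L}^\zeta_\infty$ is $K$-linear, a difference $P-Q$ with equal $\infty$-adic realizations lies in $\ker(\mathscr{L}_\infty^\zeta)\cap\Hfk_w$. This is homogeneous of weight $w$, so its finite realization lies in $\gfk_{\Acal_K}\Zcal_{\Acal_K,w-q}$ by \cref{lem:finite-CCM-relations}.
\item \emph{Surjectivity.} This is the first statement of \cref{cor:finite-realization-span}: the weight-$w$ space is spanned by $\Zcal^{\zeta,\mathrm{T}}_{\Acal_K,w}$ modulo $\gfk_{\Acal_K}\Zcal_{\Acal_K,w-q}$.
\item \emph{Kernel.} I claim $\zeta_A(q-1)\Zcal_{\infty,w-q+1}\subseteq\ker\Phi_w$. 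Any element there is $\mathscr{L}^\zeta_\infty([q-1]\ast^\zeta Q)$ with $Q\in\Hfk_{w-q+1}$, by \cref{thm:strict-bullet-character}. Since $\ast^\zeta$ preserves weight, $[q-1]\ast^\zeta Q\in\Hfk_w$. Its image under $\Phi_w$ is $\zeta_{\Acal_K}(q-1)\mathscr{L}^\zeta_{\Acal_K}(Q)$ by \cref{thm:finite-character}, and this vanishes because $\zeta_{\Acal_K}(q-1)=0$ (\cite[Proposition~4.1.3(1)]{Shi18}).
\end{enumerate}

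It remains to compute dimensions. By \cref{thm:basis_for_infty}, $\dim_K\Zcal_{\infty,w}=\#\Ical_w^{\mathrm T}=D_{\infty,w}$. Because $\Zcal_\infty\subseteq K_\infty$ is a domain and $\zeta_A(q-1)\neq0$, multiplication by $\zeta_A(q-1)$ is injective. Hence $\dim_K\zeta_A(q-1)\Zcal_{\infty,w-q+1}=D_{\infty,w-q+1}$. Surjectivity of $\Phi_w$ then gives the first inequality. Adding the induction hypothesis and using the recursion yields $\dim_K\Zcal_{\Acal_K,w}\leq D_{\Acal_K,w}$.

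The base cases $w<q$ need no separate argument: then $\Zcal_{\Acal_K,w-q}=0$ and the same argument applies verbatim. The main point to get right is that everything stays inside a single weight. The map of \cref{thm:finite-comparison-map} is stated modulo the whole ideal $\gfk_{\Acal_K}\Zcal_{\Acal_K}$, which is not obviously graded because we do not know that $\Zcal_{\Acal_K}$ is a direct sum of its weight pieces. This is why I will use the homogeneous form of \cref{lem:finite-CCM-relations} rather than \cref{thm:finite-comparison-map} itself.
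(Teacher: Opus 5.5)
Your proposal is correct and is essentially the paper's proof. Both arguments build a weight-$w$ surjection $\Zcal_{\infty,w}/\zeta_A(q-1)\Zcal_{\infty,w-q+1}\to\Zcal_{\Acal_K,w}/\gfk_{\Acal_K}\Zcal_{\Acal_K,w-q}$ from the homogeneous form of \cref{lem:finite-CCM-relations} and then induct on $w$ via the generating-function identity. The paper routes that identity through $D_{v,w}=D_{\infty,w}-D_{\infty,w-q+1}=D_{\Acal_K,w}-D_{\Acal_K,w-q}$, which is exactly your $H_{\Acal_K}(1-X^q)=H_\infty(1-X^{q-1})$; your explicit checks of well-definedness, surjectivity and kernel containment in each weight simply spell out what the paper compresses into ``by the construction in the proof of \cref{thm:finite-comparison-map}''.
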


\begin{proof}
Put $\Zcal_{\infty,w}=\Zcal_{\Acal_K,w}=0$ for $w<0$. By the construction in the proof of
\cref{thm:finite-comparison-map}, together with
\cref{lem:finite-CCM-relations}, there exists a surjective $K$-linear map
\[
{\Zcal_{\infty,w}}
/{\zeta_A(q-1)\Zcal_{\infty,w-(q-1)}}
\longrightarrow
{\Zcal_{\Acal_K,w}}
/{\gfk_{\Acal_K}\Zcal_{\Acal_K,w-q}}.
\]
Since $\zeta_A(q-1)\neq0$, we have
\[
\dim_K
{\Zcal_{\infty,w}}
/{\zeta_A(q-1)\Zcal_{\infty,w-(q-1)}}
=
D_{\infty,w}-D_{\infty,w-(q-1)}
=
D_{v,w}.
\]
Thus, we obtain
\[
\begin{aligned}
\dim_K\Zcal_{\Acal_K,w}
&=
\dim_K
{\Zcal_{\Acal_K,w}}
/{\gfk_{\Acal_K}\Zcal_{\Acal_K,w-q}}
+
\dim_K
\left(
\gfk_{\Acal_K}\Zcal_{\Acal_K,w-q}
\right)\\
&\leq
D_{v,w}+\dim_K\Zcal_{\Acal_K,w-q}.
\end{aligned}
\]
Since $H_v(X)=(1-X^q)H_{\Acal_K}(X)$, we have
\[
D_{v,w}
=
D_{\Acal_K,w}-D_{\Acal_K,w-q},
\]
where $D_{\Acal_K,w-q}=0$ if $w<q$.
We prove the result by induction on $w$. For $w<q$, the assertion is
immediate. For $w\geq q$, the induction hypothesis gives
\[
\dim_K\Zcal_{\Acal_K,w}
\leq
D_{\Acal_K,w}-D_{\Acal_K,w-q}
+D_{\Acal_K,w-q}
=
D_{\Acal_K,w}.
\]
\end{proof}

\begin{rmk}\label{rmk:gisunit}
In fact, by the degree-wise form of Thakur's fundamental relation (see \cite{Tha09}), we have
\[
\gfk_{\Acal_K}
=
(S_{\deg v-1}(q)\bmod{v})_v
=
\left(\frac{1}{L_{\deg v-1}^q}\bmod{v}\right)_v.
\]
Note that since $v\nmid L_{\deg v-1}$,
$\gfk_{\Acal_K}\in\Acal_K^\times$.
In particular, multiplication by $\gfk_{\Acal_K}$ is injective on
$\Zcal_{\Acal_K}$, and hence
\[
\dim_K
\left(
\gfk_{\Acal_K}\Zcal_{\Acal_K,w-q}
\right)
=
\dim_K\Zcal_{\Acal_K,w-q}
\]
for every $w\geq0$.
\end{rmk}

\subsection{A Unified Comparison Conjecture}

Combining \cref{conj:ffKZ,conj:CCM-v-adic}, we are led to the following unified comparison conjecture.

\begin{conj}\label{conj:unified-comparison}
The comparison maps
\[
\Zcal_v
\xleftarrow[\ \sim\ ]{\Phi_{\infty,v}}
{\Zcal_{\infty}}
/{\zeta_A(q-1)\Zcal_{\infty}}
\xrightarrow[\ \sim\ ]{\Phi_{\infty,\Acal_K}}
{\Zcal_{\Acal_K}}
/{\gfk_{\Acal_K}\Zcal_{\Acal_K}}
\]
are $K$-algebra isomorphisms.
\end{conj}

\section*{Acknowledgments}
The author is grateful to Chieh-Yu Chang for carefully reading an
earlier version of the manuscript and for valuable suggestions that
improved its presentation. This work was carried out during the author's
visit to Tohoku University in Sendai, which was supported by the
Japan-Taiwan Exchange Association. The author also gratefully
acknowledges the support of the National Science and Technology Council
over the past few years under grant no.\ 113-2628-M-007-004.

%%%%%%%%%%%%%%%%%%%%%%%%%%%%%%%%%%%%%%%%
%%%%%%%%%%%%%%%%%%%%%%%%%%%%%%%%%%%%%%%%
\sloppy

\end{document}